\documentclass[a4paper,pdftex]{amsart}
\usepackage{amsmath,amssymb,amsbsy,amsthm,amsfonts,comment,tikz}
\usepackage[all]{xy}
\usepackage{graphicx,color}
\usepackage{hyperref}
\hypersetup{
    colorlinks=true,
    linkcolor=purple,
    urlcolor=orange,
    citecolor=teal,
    pdfauthor = {Ryuma Orita},
    pdftitle = {Maximal topological complexity of monotone symplectic 4-manifolds},
    pdfpagemode = UseNone,
    bookmarksnumbered=true,
}

\title[Topological complexity of symplectic 4-manifolds]{Maximal topological complexity of monotone symplectic 4-manifolds}

\author{Ryuma Orita}
\address{Department of Mathematics, Faculty of Science, Niigata University, Niigata 950-2181, Japan}
\email{\href{mailto:orita@math.sc.niigata-u.ac.jp}{orita@math.sc.niigata-u.ac.jp}}
\urladdr{\url{https://ryuma-orita.netlify.app/}}

\subjclass[2020]{Primary 55M30; Secondary 53D05}
\keywords{Topological complexity, Lusternik--Schnirelmann category, symplectic manifolds, Kodaira dimension, ruled surfaces}
\thanks{This work was supported by JSPS KAKENHI Grant Numbers 21K13787 and 26K16979.}

\newtheorem{theorem}{Theorem}[section]
\newtheorem{lemma}[theorem]{Lemma}
\newtheorem{proposition}[theorem]{Proposition}
\newtheorem{corollary}[theorem]{Corollary}

\theoremstyle{definition}
\newtheorem{definition}[theorem]{Definition}

\newtheorem{problem}[theorem]{Problem}

\theoremstyle{remark}
\newtheorem{remark}[theorem]{Remark}

\newcommand{\ZZ}{\mathbb{Z}}
\newcommand{\QQ}{\mathbb{Q}}
\newcommand{\RR}{\mathbb{R}}

\newcommand{\Hom}{\operatorname{Hom}}
\newcommand{\Image}{\operatorname{Im}}
\newcommand{\Ker}{\operatorname{Ker}}

\newcommand{\relmiddle}[1]{\mathrel{}\middle#1\mathrel{}}

\newcommand{\pr}{\mathrm{pr}}
\newcommand{\cat}{\mathsf{cat}}
\newcommand{\TC}{\mathsf{TC}}

\newcommand{\wgt}{\mathrm{wgt}}

\newcommand{\CP}{\mathbb{C}P}
\newcommand{\barCP}{\overline{\mathbb{C}P}^2}

\begin{document}

\begin{abstract}
We continue the study of Farber's topological complexity for monotone symplectic manifolds initiated in \cite{Or25}.
First, we show that a closed spherically monotone symplectic manifold whose fundamental group contains no subgroup isomorphic to $\ZZ\oplus\ZZ$ is automatically toroidally monotone, with the same monotonicity constant.
As a consequence, every closed $4$-dimensional spherically monotone symplectic manifold whose Kodaira dimension is not $-\infty$ and whose fundamental group contains no $\ZZ\oplus\ZZ$ (for instance, is Gromov hyperbolic) has maximal topological complexity $\TC(M)=9$.
This settles, under strictly weaker hypotheses, the dichotomy $\TC(M)\in\{8,9\}$ left open there.
Second, we compute the topological complexity and the Lusternik--Schnirelmann category of all blowups of $S^2$-bundles over closed orientable surfaces of genus $g\geq 2$: they satisfy $\cat(M)=4$ and $\TC(M)=7$.
In particular, the hypothesis on the Kodaira dimension in the first result cannot be removed,
and closed symplectic $4$-manifolds realize the pairs $(\cat(M),\TC(M))=(3,5)$, $(4,7)$, $(5,9)$ in the three regimes considered in this paper.
Throughout, $\TC$ and $\cat$ are taken in the unreduced convention.
\end{abstract}

\maketitle

\tableofcontents


\section{Introduction and main results}\label{sec:introduction}

Farber's \textit{topological complexity} $\TC(X)$ \cite{Fa03,Fa04} is a numerical homotopy invariant of a path-connected topological space $X$
which measures the discontinuity of any motion planning algorithm on the configuration space $X$ of a mechanical system.
Writing $X^I$ for the space of paths $\gamma\colon I=[0,1]\to X$ and
\[
	p\colon X^I\to X\times X,\qquad p(\gamma)=(\gamma(0),\gamma(1)),
\]
for the free path fibration, $\TC(X)$ is the minimal number $k$ of open sets $U_1,\ldots,U_k$ covering $X\times X$
in such a way that $p$ admits a continuous section over each $U_i$;
if no finite cover of this kind exists, one sets $\TC(X)=\infty$.
As in \cite{Or25}, we use the unreduced convention;
part of the literature works with the reduced version, which is smaller by one.
If $X$ is a connected CW-complex, then
\begin{equation}\label{eq:upper_bound}
    \TC(X)\leq 2\dim{X}+1,
\end{equation}
see \cite[Theorem 4]{Fa03},
and $\TC(X)$ is called \textit{maximal} \cite{CV21} if the equality holds in \eqref{eq:upper_bound}.
A closely related invariant is the \textit{Lusternik--Schnirelmann category} $\cat(X)$,
i.e., the minimal number of open sets, each contractible in $X$, needed to cover $X$ (again in the unreduced convention);
one has \cite[Theorem 5 and its proof]{Fa03}
\begin{equation}\label{eq:TC_cat}
    \cat(X)\leq\TC(X)\leq\cat(X\times X)\leq 2\dim X+1.
\end{equation}
We refer the reader to \cite{Fa06,Fa08} and to the introduction of \cite{Or25} for background.

\textit{
    Following \cite{Or25}, all symplectic manifolds in this paper are assumed to be connected and not simply connected, unless explicitly stated otherwise.
}

\subsection{From spherical to toroidal monotonicity}

In the predecessor \cite{Or25} of the present paper, we studied $\TC$ for \textit{monotone} symplectic manifolds
and proved the following.
Here $\kappa(M,\omega)$ denotes the Kodaira dimension of a symplectic $4$-manifold,
and we refer the reader to Section \ref{sec:preliminaries} for the definitions of the toroidal and spherical monotonicity.

\begin{theorem}[{\cite[Theorem 1.4]{Or25}}]\label{thm:previous}
    Let $(M,\omega)$ be a closed $4$-dimensional symplectic manifold whose Kodaira dimension is not $-\infty$.
    \begin{enumerate}
        \item If $(M,\omega)$ is toroidally monotone, then $\TC(M)=9$.
        \item If $(M,\omega)$ is spherically monotone and the fundamental group $\pi_1(M)$ is of type $FL$
        and the centralizer of every non-trivial element of $\pi_1(M)$ is infinite cyclic,\footnote{In \cite{Or25} this hypothesis is misstated as ``every non-trivial element of the center is infinitely cyclic''. The hypothesis intended, and the one used in the proof via \cite[Theorem 3]{FM20}, is the one stated here; the same correction applies to \cite[Theorem 4.4]{Or25}.}
        then $\TC(M)$ is either $8$ or $9$.
    \end{enumerate}
\end{theorem}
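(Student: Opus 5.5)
This theorem is quoted from \cite{Or25}, so I would reprove it along the route used there. The lower bounds come from zero-divisor cup-length and its weighted refinement. The upper bound $\TC(M)\le 9$ is automatic from \eqref{eq:upper_bound}, since $\dim M=4$.

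For part (1), the plan is to show that the nine-fold product of suitable zero-divisors is nonzero. Since $\kappa(M,\omega)\neq-\infty$, $M$ is minimal or has a minimal model with $c_1\cdot[\omega]\le 0$ in the appropriate sense. Monotonicity on tori, $[\omega]|_{T}=\lambda\, c_1|_{T}$, combined with $\kappa\neq-\infty$, should force $\lambda\le 0$ or otherwise constrain the classes.

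The key input is the structure of the Hurewicz and toroidal images. If some map $f\colon T^2\to M$ has $\int f^*\omega\neq0$, one obtains a degree-$1$ class $a$ with $a\cup b\,\omega$-type products nonzero. The alternative I would actually use is the Rudyak--Oprea / Grant--Mescher criterion: $\TC(M)=2n+1$ follows if $[\omega]$ is aspherical in the toroidal sense. In that case $\omega$ vanishes on $\pi_2$ and on tori, which gives $\TC$-weight $2$ to the zero-divisor $\bar\omega=1\times\omega-\omega\times1$ in the sense of category weight. Then $\bar\omega^{\,n}\cdot\bar\omega^{\,n}\neq0$ for $n=2$ yields the bound $1+2\cdot 4=9$.

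So the steps for part (1) are as follows.
\begin{enumerate}
\item Use $\kappa\neq-\infty$, via Taubes/Li--Liu results, to show that the monotonicity constant must be $0$ or negative. If it is $0$, $\omega$ is toroidally aspherical and Grant--Mescher applies directly.
\item If the constant is negative, replace $\omega$ by $-c_1$ up to scaling on the relevant classes. Then argue that $c_1$ is toroidally aspherical, again using monotonicity and the vanishing of $\int\omega$ on the appropriate spheres and tori.
\end{enumerate}

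For part (2), the same argument with only spherical asphericity gives weight-$2$ classes relative to the universal cover. The hypotheses that $\pi_1$ is $FL$ and that centralizers are cyclic let me invoke \cite[Theorem 3]{FM20} to control the contribution of tori. This gives $\TC(M)\ge 8$, hence $\TC(M)\in\{8,9\}$.

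The main obstacle is step (2) of part (1): excluding the positive-constant case, and converting monotonicity into genuine toroidal asphericity of a symplectic-type class. I would first try to use $\kappa\ge0$, which implies $K\cdot[\omega]\ge0$ on the minimal model, to rule out positive monotonicity.
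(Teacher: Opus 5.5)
There is a genuine gap, and it is exactly at the step you flag as the main obstacle. The positive-constant case cannot be ruled out, because it genuinely occurs.

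Take $g,h\geq 2$ and $M=(\Sigma_g\times\Sigma_h)\mathbin{\#}\barCP$ with a blowup form, so $\pi_1(M)\neq 1$ and $\kappa(M,\omega)=2$. The exceptional sphere $E$ is also a torus class (precompose with a degree-one map $T^2\to S^2$). So toroidal monotonicity forces $\lambda=\langle[\omega],E\rangle/\langle c_1,E\rangle=\varepsilon>0$. And $M$ really is toroidally monotone with this $\lambda$: the class $[\omega]-\varepsilon c_1=c^{\ast}\bigl([\omega_{\min}]-\varepsilon c_1(\Sigma_g\times\Sigma_h)\bigr)$ is pulled back from $H^2(\Sigma_g)\oplus H^2(\Sigma_h)$, and every map $T^2\to\Sigma_g$ has degree zero. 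The same forcing $\lambda=\langle[\omega],E_i\rangle>0$ happens for every non-minimal $M$ covered by the theorem.

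So your step (1) is false. Your step (2) treats negative constants, which toroidal monotonicity (defined with $\lambda\geq 0$) does not allow. And in general neither $[\omega]$ nor $c_1$ is atoroidal; in the example above both are nonzero on the torus class $E$. There is therefore no ``symplectic-type class'' to which Grant--Mescher applies.

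The missing idea is to apply the Grant--Mescher estimate to $u=[\omega]-\lambda c_1$. This class is atoroidal \emph{by the definition} of toroidal monotonicity, whatever $\lambda\geq 0$ is, and $\wgt(\bar u)\geq 2$ holds for any atoroidal closed $2$-form, symplectic or not.

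The hypothesis $\kappa\neq-\infty$ is not used to constrain $\lambda$; it is used only to show $u^2\neq 0$. On the exceptional classes, $\langle[\omega],E_i\rangle=\lambda\langle c_1,E_i\rangle=\lambda$, so $u=c^{\ast}u_{\min}$ for the collapse map $c\colon M\to M_{\min}$. Hence
\[
    u^2=[\omega_{\min}]^2+2\lambda\,K_{\min}\cdot[\omega_{\min}]+\lambda^2\,K_{\min}\cdot K_{\min}>0,
\]
using $K_{\min}\cdot K_{\min}\geq 0$, $K_{\min}\cdot[\omega_{\min}]\geq 0$ and $\lambda\geq 0$. It is precisely the non-negativity of $\lambda$ that helps, the opposite of what you set out to prove. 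Then $\bar u^4=6\,u^2\times u^2\neq 0$ has weight at least $8$, which gives $\TC(M)\geq 9$. (A plain nine-fold product of zero-divisors, as in your opening sentence, would vanish for degree reasons in the $8$-manifold $M\times M$.)

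Part (2) needs the same correction. There $u$ is only aspherical, so $\wgt(\bar u)\geq 2$ is not available; that missing toroidal information is exactly the obstruction responsible for the dichotomy. Instead:
\begin{enumerate}
\item write $u$ as a pullback from $K(\pi_1(M),1)$ via \cite[Lemma 2.1]{RT99};
\item use type $FL$ to have a finite model;
\item apply the weight estimate of \cite[Theorem 3]{FM20} under the centralizer hypothesis, which loses one, together with $u^2\neq 0$ as above.
\end{enumerate}
This yields $\TC(M)\geq 8$. Your sketch of (2) (``weight-$2$ classes relative to the universal cover'') does not identify this mechanism, and it inherits the false reduction from part (1).
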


The dichotomy in Theorem \ref{thm:previous} (ii) has its origin in the theory of topological complexity of \textit{aspherical} spaces, which we now recall.
By the classical theorem of Eilenberg and Ganea \cite{EG57},
the Lusternik--Schnirelmann category of an aspherical space is determined by its fundamental group alone:
$\cat(K(\pi,1))=\mathrm{cd}(\pi)+1$, where $\mathrm{cd}$ denotes the cohomological dimension.
By the homotopy invariance of $\TC$,
the topological complexity of $K(\pi,1)$ also depends only on $\pi$,
and one writes $\TC(\pi):=\TC(K(\pi,1))$;
however, no algebraic description of $\TC(\pi)$ analogous to the Eilenberg--Ganea theorem is known,
and the problem of computing $\TC(\pi)$, posed by Farber \cite{Fa06}, remains open in general.
Among the many partial results
we mention the computations for orientable surfaces and tori \cite{Fa03},
for non-orientable surfaces \cite{Dr16,CV17},
the characterization of the groups with $\TC(\pi)=2$ \cite{GLO13},
upper bounds for nilpotent groups \cite{Gr12},
lower bounds in terms of pairs of subgroups whose conjugates intersect trivially \cite{GLO15b},
a mapping theorem for $\TC$ \cite{GLO15a},
and the description of $\TC(\pi)$ in terms of Bredon cohomology
by Farber, Grant, Lupton, and Oprea \cite{FGLO19};
see also \cite{CF10,CV21}.
A milestone was reached by Farber and Mescher \cite{FM20}.

\begin{theorem}[{\cite[Theorem 1]{FM20}}]\label{thm:FM}
    Let $X$ be a connected aspherical finite cell complex whose fundamental group $\pi=\pi_1(X)$ is hyperbolic in the sense of Gromov.
    Then $\TC(X)$ equals either $\mathrm{cd}(\pi\times\pi)$ or $\mathrm{cd}(\pi\times\pi)+1$.
\end{theorem}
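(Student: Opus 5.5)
Here $\pi$ is torsion-free (a group of finite cohomological dimension has no torsion), and $X$ is a finite model of $K(\pi,1)$. The upper bound comes first. By \eqref{eq:TC_cat},
\[
\TC(X)\le\cat(X\times X).
\]
Since $X\times X$ is aspherical with fundamental group $\pi\times\pi$, the Eilenberg--Ganea theorem \cite{EG57} gives $\cat(X\times X)=\mathrm{cd}(\pi\times\pi)+1$.

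Everything else is in the lower bound $\TC(X)\ge\mathrm{cd}(\pi\times\pi)$, which is the unreduced form of the statement that the reduced $\TC$ is at least $\mathrm{cd}(\pi\times\pi)-1$. I will deduce it from the Bredon-cohomology description of \cite{FGLO19}. Let $\mathcal D$ be the family of subgroups of $\pi\times\pi$ generated by the diagonal $\Delta$ and all its conjugates, closed under finite intersections. By \cite{FGLO19}, the reduced $\TC(\pi)$ is bounded below by any $k$ such that the $k$-fold power of a canonical Bredon class $\mathfrak v\in H^1_{\mathcal D}(\pi\times\pi;I)$ is nonzero. It is also bounded below by $\mathrm{cd}_{\mathcal D}(\pi\times\pi)$, provided the canonical class detects it.

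The plan therefore has three steps. First, identify the intersections of conjugates of $\Delta$. A conjugate $(a,b)\Delta(a,b)^{-1}$ meets $\Delta$ in elements $(g,g)$ with $g$ commuting with $a^{-1}b$. In a torsion-free hyperbolic group the centralizer of a nontrivial element is infinite cyclic, so these intersections are trivial or infinite cyclic. Second, compare $\mathrm{cd}_{\mathcal D}(\pi\times\pi)$ with $\mathrm{cd}(\pi\times\pi)$. I will use a Lück--Weiermann type pushout that builds the classifying space $E_{\mathcal D}(\pi\times\pi)$ from $E(\pi\times\pi)$ by coning off the cosets of the maximal elements of $\mathcal D$. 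The stabilizers added in this process are conjugates of $\Delta\cong\pi$ and the cyclic intersections. Because $\mathrm{cd}(\pi)$ is smaller than $\mathrm{cd}(\pi\times\pi)$ (for $\pi$ nontrivial), the top-degree Bredon cohomology survives up to a drop of at most one. This yields $\mathrm{cd}_{\mathcal D}(\pi\times\pi)\ge\mathrm{cd}(\pi\times\pi)-1$.

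Third, I must show that this Bredon dimension is realized by a nonzero power of the canonical class, so that it actually bounds $\TC$ from below. Here I would use the universality of $\mathfrak v$ among classes that vanish on $\mathcal D$-subgroups, together with the fact that $X$ is finite, so the top cohomology is finitely generated.

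I expect the second and third steps to be the main obstacle. The Mayer--Vietoris sequence coming from the pushout has to be controlled in degrees $n=\mathrm{cd}(\pi\times\pi)$ and $n-1$. The difficulty is to ensure that the contributions of the infinite cyclic intersections, which have cohomological dimension $1$, and of the diagonal conjugates, which have dimension $\mathrm{cd}(\pi)$, cannot kill the top class. For small $\mathrm{cd}$, such as surface groups, this needs an explicit check, for example by comparing with the known value for surfaces \cite{Fa03}. The trivial-group case is immediate.

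Combining the two bounds, $\mathrm{cd}(\pi\times\pi)\le\TC(X)\le\mathrm{cd}(\pi\times\pi)+1$, which is the claimed dichotomy.
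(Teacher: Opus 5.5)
Your upper bound is fine and is the route the paper indicates: $\TC(X)\le\cat(X\times X)=\mathrm{cd}(\pi\times\pi)+1$ by \eqref{eq:TC_cat} and Eilenberg--Ganea. The gap is the lower bound $\TC(X)\ge\mathrm{cd}(\pi\times\pi)$. That lower bound is the entire content of the theorem, and your proposal does not prove it.

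In \cite{FGLO19} the Bredon dimension enters as an \emph{upper} bound, $\TC(\pi)\le\max\{3,\mathrm{cd}_{\mathcal D}(\pi\times\pi)\}$ in the reduced convention. So your Step 2, even if completed, gives nothing from below. The proviso ``provided the canonical class detects it'' shifts all of the work into Step 3.

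In Step 3, the relevant criterion is also not that $\mathfrak v^k\neq 0$ in Bredon cohomology. If $\Phi\colon E(\pi\times\pi)\to E_{\mathcal D}(\pi\times\pi)$ deforms into the $m$-skeleton, then $\Phi^*$ kills every Bredon class of degree $>m$. What you must show is that the pulled-back power $\Phi^*(\mathfrak v^k)=v^k\in H^k(\pi\times\pi;I^{\otimes k})$ of the ordinary canonical class is non-zero for $k=\mathrm{cd}(\pi\times\pi)-1$. Invoking the universality of $\mathfrak v$ and finite generation of top-degree cohomology does not give this non-vanishing. Your sketch also never explains how the cyclicity of centralizers would force it, and that is exactly where hyperbolicity must be used.

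The tool you propose for Step 2 is not available as stated either. Suppose $\pi$ is non-elementary, hence has trivial centre, and let $1\neq x\in\pi$. The infinite cyclic group $\Delta\cap(x,1)\Delta(x,1)^{-1}=\{(g,g):g\in C_\pi(x)\}$ lies in each of the pairwise distinct conjugates $(y,1)\Delta(y,1)^{-1}$, $y\in C_\pi(x)$. So non-trivial members of $\mathcal D$ have no unique maximal overgroup. Coning off the cosets of the conjugates of $\Delta$ in $E(\pi\times\pi)$ then gives a space in which this subgroup fixes an infinite discrete set of cone points rather than a contractible set. The result is therefore not a model for $E_{\mathcal D}(\pi\times\pi)$.

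The missing idea is the one carried out in \cite{FM20}, as recalled in the paper. One stays in the ordinary cohomology of $\pi\times\pi$ and runs an obstruction theory for essential cohomology classes built from powers of $v\in H^1(\pi\times\pi;I)$. The obstructions there are governed by the cohomology of centralizers of elements of $\pi$. Since these centralizers are infinite cyclic, all obstructions vanish except a single one in the row governed by $H^1$ of the centralizers. This produces a non-zero essential class in degree $\mathrm{cd}(\pi\times\pi)-1$, and the one surviving obstruction is precisely the loss of one in the statement.

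A Bredon-theoretic route can work: it is the framework of \cite{Dr20}, which even yields the maximal value. But it works only once the non-vanishing of the pulled-back canonical class is actually established, and your Steps 2 and 3 do not do this.
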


Here $\mathrm{cd}(\pi\times\pi)=2\,\mathrm{cd}(\pi)$ for every hyperbolic group $\pi$
by a theorem of Dranishnikov (see \cite{Dr20} and the references therein),
and the general upper bound $\TC(X)\leq\mathrm{cd}(\pi\times\pi)+1$
follows from \eqref{eq:TC_cat} applied to a CW model of $X\times X$ of minimal dimension
together with the Eilenberg--Ganea theorem (see \cite[Section 2]{FM20}).\footnote{The
Eilenberg--Ganea theorem is used here through the equality of $\mathrm{cd}(\pi\times\pi)$
with the minimal dimension of a CW model of $K(\pi\times\pi,1)$.
This equality is classical for $\mathrm{cd}\neq 2$,
the case $\mathrm{cd}=2$ being the open Eilenberg--Ganea conjecture;
it also holds in the only remaining case relevant here,
since $\mathrm{cd}(\pi\times\pi)=2$ forces $\mathrm{cd}(\pi)\leq 1$ (multiply a top-dimensional class of the first factor by the Berstein--Schwarz class of the second, so that
$\mathrm{cd}(\pi\times\pi) \geq \mathrm{cd}(\pi)+1$
for any non-trivial $\pi$),
hence $\pi$ is free by the Stallings--Swan theorem (see \cite[Chapter VIII]{Br82}),
and then $K(\pi,1)\times K(\pi,1)$ admits a $2$-dimensional model.}
Thus Theorem \ref{thm:FM} states that $\TC(X)$ equals either $2\,\mathrm{cd}(\pi)$ or $2\,\mathrm{cd}(\pi)+1$,
i.e., is either maximal or one less than maximal.
The proof of \cite{FM20} proceeds by an obstruction theory for so-called essential cohomology classes,
built out of the powers of the canonical class $v\in H^1(\pi\times\pi;I)$;
the loss of one in Theorem \ref{thm:FM} can be traced to a single surviving obstruction,
governed by the first cohomology of the centralizers of elements of $\pi$
(see Remark \ref{rem:comparison} below),
and Farber and Mescher asked whether the relevant top power of the canonical class is always non-zero,
a positive answer to which would yield maximality.
The dichotomy was indeed resolved, by Dranishnikov, in the maximal direction.

\begin{theorem}[{\cite[Theorem 3.0.2]{Dr20}}]\label{thm:Dr}
    Let $\pi$ be a finitely generated torsion-free hyperbolic group with $\mathrm{cd}(\pi)=n\geq 2$.
    Then $\TC(K(\pi,1))=2n+1$; that is, $\TC(K(\pi,1))$ is maximal.
\end{theorem}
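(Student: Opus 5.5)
The plan is to follow the Farber--Mescher obstruction-theoretic framework behind Theorem \ref{thm:FM} and to show that, for a torsion-free hyperbolic group, the single surviving obstruction vanishes. Concretely, let $I\subset\ZZ[\pi]$ be the augmentation ideal, viewed as a $\pi\times\pi$-module through the action $(g,h)\cdot x=gxh^{-1}$. Let $v\in H^1(\pi\times\pi;I)$ be the canonical class. By the Costa--Farber characterization, $\TC(K(\pi,1))=2n+1$ holds if and only if the power $v^{2n}\in H^{2n}(\pi\times\pi;I^{\otimes 2n})$ is non-zero. Since $\mathrm{cd}(\pi\times\pi)=2n$ by Dranishnikov's product formula, the upper bound $2n+1$ is already given by \eqref{eq:TC_cat}. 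So the whole task is to prove $v^{2n}\neq 0$.

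The first step is to interpret $v^{2n}$ geometrically. Restrict $\pi\times\pi$-cohomology along the diagonal and its conjugates. Decompose $\ZZ[\pi]$, as a $\pi\times\pi$-module, into a sum of the permutation modules $\ZZ[\pi\times\pi/Z_\Delta(\gamma)]$ indexed by conjugacy classes $[\gamma]$. Here $Z_\Delta(\gamma)=\{(g,\gamma^{-1}g\gamma)\}$ is isomorphic to the centralizer-twisted diagonal. By Shapiro's lemma, the cohomology of $\pi\times\pi$ with coefficients built from $\ZZ[\pi]$ becomes a product over conjugacy classes of the cohomology of the centralizers $C(\gamma)$. In a torsion-free hyperbolic group each $C(\gamma)$ with $\gamma\neq1$ is infinite cyclic, which is exactly where the hyperbolicity hypothesis enters. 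The cofibre sequence $0\to I\to\ZZ[\pi]\to\ZZ\to 0$ and its tensor powers then let me express the obstruction to $v^{2n}\neq0$ through the group $H^{1}$ of these cyclic centralizers.

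The second step is to show that $v^{2n}$ pairs non-trivially with a suitable top-dimensional homology class. I would use the fact that $\pi$ is a Poincaré duality-type group at infinity, via the Bestvina--Mess boundary: $H^n(\pi;\ZZ[\pi])\cong \check H^{n-1}(\partial\pi)\neq0$. The plan is to choose a non-cyclic free subgroup, or a pair of elements $a,b$ generating malnormal cyclic subgroups $\langle a\rangle,\langle b\rangle$ whose conjugates intersect trivially. The aim is a lower bound in the spirit of \cite{GLO15b}: $\TC\ge \mathrm{cd}(H\times K)+1$ whenever $gHg^{-1}\cap K=1$ for all $g$. The naive version only gives $2$. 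So instead I would work with the whole group and use the fact that the infinite cyclic centralizers have cohomological dimension $1<n$ once $n\ge 2$. The contribution of each $[\gamma]\neq1$ to $H^{2n}$ then vanishes for dimensional reasons. This forces the restriction map $H^{2n}(\pi\times\pi;I^{\otimes 2n})\to$ the term indexed by the trivial class to be injective on the image of $v^{2n}$. On that term, $v^{2n}$ becomes the product $u\times u$ of Berstein--Schwarz classes, with $u^n\ne0$ because $\mathrm{cd}(\pi)=n$.

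The main obstacle is this dimensional vanishing. The Shapiro decomposition only holds after passing to suitable completions, since the sum over conjugacy classes is infinite. The $\lim^1$ and extension terms coming from the short exact sequences must be controlled, and these are precisely the $H^1(C(\gamma))$ contributions that Farber--Mescher could not eliminate. To handle them I would first try to exploit the hypothesis $n\ge2$ through a spectral sequence in which the $C(\gamma)$-terms sit only in degrees $\le n+1<2n$. I would then check that they cannot hit the class $u^n\times u^n$ in degree $2n$.
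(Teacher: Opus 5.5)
There is a genuine gap, and it sits exactly where the difficulty of the theorem lies. (The paper gives no proof of its own: it cites \cite{Dr20} and only notes that Dranishnikov's argument runs through the Bredon-cohomology characterization of $\TC(\pi)$ in \cite{FGLO19}.)

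Your first step rests on a false decomposition. The action $(g,h)\cdot x=gxh^{-1}$ of $\pi\times\pi$ on $\pi$ is \emph{transitive}, and the stabilizer of $1$ is the diagonal $\Delta$. So $\ZZ[\pi]\cong\ZZ[(\pi\times\pi)/\Delta]$ is a single transitive permutation module and does not split over conjugacy classes. Your subgroup $\{(g,\gamma^{-1}g\gamma)\}$ is just the conjugate $(1,\gamma^{-1})\Delta(1,\gamma)\cong\pi$, not a copy of $C(\gamma)$. Centralizers enter only as intersections of \emph{distinct} conjugates of the diagonal, $\Delta\cap(1,\gamma)\Delta(1,\gamma)^{-1}=\{(c,c):c\in C(\gamma)\}$, i.e.\ through the double cosets $\Delta\backslash(\pi\times\pi)/\Delta$. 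Restricting to $\Delta$ and its conjugates cannot detect $v^{2n}$ in any case: $1\in\ZZ[\pi]$ is $\Delta$-fixed, so $0\to I\to\ZZ[\pi]\to\ZZ\to 0$ splits over $\Delta$ and $v|_{\Delta}=0$. Moreover, Shapiro's lemma in cohomology needs coinduced modules, whereas $\ZZ[(\pi\times\pi)/\Delta]$ is induced from a subgroup of infinite index. So there is no well-defined ``trivial-class term'' on which $v^{2n}$ becomes $u^n\times u^n$.

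The decisive problem is the vanishing of the centralizer contributions in degree $2n$ ``for dimensional reasons'', which you assert but do not prove. In the Farber--Mescher framework you set out to follow, it is not true. There the cyclic groups $C(\gamma)$ arise as stabilizers of tuples of distinct cosets of $\Delta$ at \emph{every} stage of the filtration coming from $I^{\otimes 2n}$. Hence the row $q=1$ reaches total degree $2n$ at the last stage, in bidegree $(2n-1,1)$. This is exactly the surviving obstruction of Remark \ref{rem:comparison}. If cyclic centralizers alone confined these terms to degrees $\le n+1$, Farber and Mescher would already have obtained maximality.

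What is missing is a model in which the cyclic isotropy genuinely lives in low dimensions. One way to supply it, in the spirit of Dranishnikov's argument, runs as follows.
\begin{enumerate}
\item Use the characterization of \cite{FGLO19} to compare $H^{2n}(\pi\times\pi;M)$ with Bredon cohomology for the family $\mathcal{D}$ generated by the conjugates of $\Delta$.
\item Build a model of $E_{\mathcal{D}}(\pi\times\pi)$ from $E(\pi\times\pi)$ by L\"{u}ck--Weiermann-type pushouts along the maximal members of $\mathcal{D}$.
\item Because centralizers are cyclic, maximal cyclic subgroups $C$ of the torsion-free group $\pi$ are malnormal. The normalizers entering the pushouts are therefore $C\times C\cong\ZZ^2$ and $\Delta$ itself.
\item The pieces these normalizers contribute have (Bredon) cohomological dimension at most $\max(n,2)<2n$ for $n\ge 2$. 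So classes in $H^{2n}(\pi\times\pi;M)$ lift to Bredon cohomology.
\item A non-zero such class exists because $\mathrm{cd}(\pi\times\pi)=2n$, and \cite{FGLO19} then gives $\TC(K(\pi,1))\ge 2n+1$.
\end{enumerate}
Without an ingredient of this kind, your argument yields at best the Farber--Mescher dichotomy.
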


In \cite{Dr20} the normalized convention is used, in which the statement reads $\TC(\pi)=2n$;
by the equality $\mathrm{cd}(\pi\times\pi)=2\,\mathrm{cd}(\pi)$ recalled above,
this is the maximal option of Theorem \ref{thm:FM}.
Dranishnikov's proof is based on the Bredon cohomology characterization of $\TC(\pi)$ established in \cite{FGLO19};
we refer the reader to \cite{HL22} for a generalization to higher topological complexity and to toral relatively hyperbolic groups.

The predecessor \cite{Or25} transplanted the Farber--Mescher stage of this development
from aspherical spaces to (typically non-aspherical) spherically monotone symplectic $4$-manifolds:
Theorem \ref{thm:previous} (ii) was proved by pulling the aspherical class $[\omega]-\lambda c_1$
back from the Eilenberg--MacLane space $K(\pi_1(M),1)$ via \cite[Lemma 2.1]{RT99}
and applying the weight estimate of \cite[Theorem 3]{FM20},
so that the dichotomy of Theorem \ref{thm:FM} was inherited.
Our first main result performs the Dranishnikov stage in this symplectic setting.
Rather than upgrading the algebraic machinery of \cite{FM20,Dr20}, however, we convert the geometry:
we show that, under a mild group-theoretic hypothesis,
spherical monotonicity automatically improves to toroidal monotonicity,
whereupon Theorem \ref{thm:previous} (i) ---
that is, ultimately, the atoroidal weight estimate of Grant and Mescher \cite{GM20} ---
applies and yields the maximal value directly.

\begin{theorem}\label{thm:A}
    Let $(M,\omega)$ be a $2n$-dimensional closed symplectic manifold
    which is spherically monotone or spherically negative monotone with monotonicity constant $\lambda$.
    Assume that
    \begin{enumerate}
        \item $\pi_1(M)$ contains no subgroup isomorphic to $\ZZ\oplus\ZZ$, and
        \item $([\omega]-\lambda c_1)^n\neq 0\in H^{2n}(M;\RR)$.
    \end{enumerate}
    Then $(M,\omega)$ is toroidally monotone or toroidally negative monotone with the same monotonicity constant $\lambda$.
    In particular,
    \[
        \TC(M)=4n+1
        \quad\text{and}\quad
        \cat(M)=2n+1.
    \]
\end{theorem}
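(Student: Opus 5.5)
Write $a=[\omega]-\lambda c_1\in H^2(M;\RR)$. Spherical monotonicity (or negative monotonicity) with constant $\lambda$ says that $a$ vanishes on the image of the Hurewicz map $\pi_2(M)\to H_2(M;\ZZ)$. Toroidal monotonicity says that $a$ vanishes on every class $f_*[T^2]$, where $f\colon T^2\to M$ is any map. So the first goal is: for every $f\colon T^2\to M$, show $\langle a,f_*[T^2]\rangle=0$.

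The first step uses hypothesis (i). The induced homomorphism $f_*\colon\pi_1(T^2)=\ZZ\oplus\ZZ\to\pi_1(M)$ cannot be injective, since its image would then be a subgroup isomorphic to $\ZZ\oplus\ZZ$. So its kernel is a nonzero subgroup of $\ZZ^2$, either of rank $1$ or of rank $2$.

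The second step handles rank $1$. Choose a primitive vector $v\in\ZZ^2$ with $kv\in\Ker f_*$ for some $k\ge1$. The image of $f_*$ is then a quotient of $\ZZ^2$ by a rank-one subgroup. We need to say something about torsion in $\pi_1(M)$, and this is the main obstacle. If $\pi_1(M)$ has torsion, the images of $v$ and $kv$ differ. Our hypothesis (i) does not forbid torsion, so I would argue directly with a $k$-fold cover instead.

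Precompose $f$ with a degree-$k$ covering $T^2\to T^2$ that unwraps the $v$-direction. The composite $g$ satisfies $g_*[T^2]=k\,f_*[T^2]$, and now $v$ itself lies in $\Ker g_*$. After a change of basis of $\ZZ^2$, $g$ restricted to the loop $S^1\times\{pt\}$ is null-homotopic. Then $g$ factors up to homotopy through the quotient $T^2/(S^1\times pt)$. This quotient is homotopy equivalent to $S^2\vee S^1$: collapsing a nonseparating circle in $T^2$ yields a pinched torus.

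Under this factorization, the fundamental class $[T^2]$ maps to the class of the $S^2$ summand. Hence $g_*[T^2]$ is spherical, and $\langle a,g_*[T^2]\rangle=0$ by spherical monotonicity. Dividing by $k$, which we may do over $\RR$, gives $\langle a,f_*[T^2]\rangle=0$.

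The third step handles rank $2$. If $\Ker f_*$ has rank $2$, the image of $f_*$ is finite. A direct factorization is awkward here. Instead, choose a finite-index sublattice inside $\Ker f_*$ and precompose $f$ with the corresponding finite cover of $T^2$. The composite kills all of $\pi_1$, so it lifts to the universal cover and factors through $T^2\to S^2$ (collapse the 1-skeleton). Its image class is therefore spherical, and the same division argument applies. Alternatively, rank $2$ is a special case of the rank-$1$ argument, since a rank-$2$ kernel contains a rank-$1$ subgroup $\langle kv\rangle$. This proves toroidal (negative) monotonicity with the same constant $\lambda$.

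The final step derives the consequences from hypothesis (ii), toroidal monotonicity, and the Grant–Mescher atoroidal weight estimate \cite{GM20}, as in the proof of Theorem \ref{thm:previous} (i) in \cite{Or25}. The class $a$ is atoroidal and has weight $2$ for $\TC$, and its pullback $\pr_1^*a-\pr_2^*a$ to $M\times M$ has weight $2$. Since $a^n\neq0$, the power $(\pr_1^*a-\pr_2^*a)^{2n}$ contains the term $\binom{2n}{n}(-1)^n\,a^n\times a^n\neq0$ and survives. Hence $\TC(M)\ge 2\cdot 2n+1=4n+1$, which meets the upper bound \eqref{eq:upper_bound}. The same weight argument applied to $a^n$ on $M$ gives $\cat(M)\ge 2n+1$, which again equals the upper bound.

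The delicate points to check are the homotopy equivalence $T^2/S^1\simeq S^2\vee S^1$ and that $[T^2]$ maps to the $S^2$ class. Both can be verified from the cellular structure: the $2$-cell of $T^2$ attached along $aba^{-1}b^{-1}$ becomes a $2$-cell attached along $bb^{-1}$, which is trivial.
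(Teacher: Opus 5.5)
Your proposal is correct and follows essentially the same route as the paper. You use the finite covering of $T^2$ to make a primitive class die, the factorization through $T^2/(S^1\times\{s_0\})\simeq S^2\vee S^1$, and division by the covering degree to show that $[\omega]-\lambda c_1$ is atoroidal; this is the paper's Lemma~\ref{lem:aspherical_implies_atoroidal} and Proposition~\ref{prop:transfer}. You then apply the Grant--Mescher weight estimate, as the paper does. The remaining differences are minor: you write out the expansion of $(\pr_1^*a-\pr_2^*a)^{2n}$ and the category-weight bound, which the paper imports from \cite[Theorems 4.3 and 4.6]{Or25}, and your separate rank-two case is redundant, as you note yourself.
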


\begin{corollary}\label{cor:A_dim4}
    Let $(M,\omega)$ be a closed $4$-dimensional spherically monotone symplectic manifold whose Kodaira dimension is not $-\infty$.
    If $\pi_1(M)$ contains no subgroup isomorphic to $\ZZ\oplus\ZZ$
    --- for instance, if $\pi_1(M)$ is a Gromov hyperbolic group ---
    then
    \[
        \TC(M)=9
        \quad\text{and}\quad
        \cat(M)=5.
    \]
\end{corollary}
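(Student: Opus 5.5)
The plan is to deduce the corollary from Theorem \ref{thm:A} with $n=2$. Hypothesis (i) of Theorem \ref{thm:A} is assumed directly. In the hyperbolic case I would note that a Gromov hyperbolic group contains no $\ZZ\oplus\ZZ$, which is classical: an infinite-order element has virtually cyclic centralizer. So the remaining work is to verify hypothesis (ii), namely $([\omega]-\lambda c_1)^2\neq 0$ in $H^4(M;\RR)$, using the Kodaira-dimension assumption.

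To get (ii), I would follow the argument already used for Theorem \ref{thm:previous} in \cite{Or25}. Kodaira dimension $\neq-\infty$ means $(M,\omega)$ is not rational or ruled. Spherical monotonicity then forces $\lambda\ge 0$ in the sign conventions of Section \ref{sec:preliminaries}. The reason is that $\lambda<0$, i.e.\ $c_1$ positive on spheres with positive area, would give a symplectic sphere of positive self-intersection or a rational/ruled structure via McDuff's theorem. I expect the cited argument in \cite{Or25} to handle this sign analysis and to show that $[\omega]-\lambda c_1$ behaves like a symplectic-type class. Concretely, I would expand
\[
([\omega]-\lambda c_1)^2=[\omega]^2-2\lambda\, c_1\cdot[\omega]+\lambda^2c_1^2 .
\]
Then I would use the facts available for non-negative Kodaira dimension: $K\cdot[\omega]\ge 0$, i.e.\ $c_1\cdot[\omega]\le 0$, by Taubes/Li; and $K^2\ge 0$ for minimal models, with a blowup correction otherwise. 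With these, each term should be non-negative and the first strictly positive, so the square is nonzero.

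The main obstacle is the non-minimal case, where $c_1^2$ can be negative. There I would use spherical monotonicity on the exceptional spheres $E$. Each satisfies $c_1(E)=1$ and $\omega(E)>0$, so monotonicity pins down the sign of $\lambda$ relative to $\omega(E)$. I expect this to be exactly the point where \cite{Or25} either rules out blowups or controls them, and I would import that lemma rather than redo it. If instead $\lambda=0$, the square is simply $[\omega]^2>0$.

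With (i) and (ii) in hand, Theorem \ref{thm:A} gives toroidal monotonicity, $\TC(M)=4\cdot2+1=9$ and $\cat(M)=2\cdot2+1=5$. Equivalently, once toroidal monotonicity is established one may invoke Theorem \ref{thm:previous}(i) for $\TC$. The value of $\cat$ then follows from the same weight argument, together with the upper bound $\cat(M)\le\dim M+1$.
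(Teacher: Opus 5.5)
Your reduction to Theorem~\ref{thm:A} with $n=2$ is correct, and you correctly see that the non-minimal case is the real difficulty. However, you do not resolve it. You propose to import from \cite{Or25} a lemma that ``rules out blowups or controls them,'' and there is no such lemma. As Remark~\ref{rem:minimal_model} explains, the argument in \cite{Or25} used $K\cdot K\geq 0$ and $K\cdot[\omega]\geq 0$ for the canonical class of $M$ itself, which is valid only for minimal $M$. The non-minimal case is exactly what must be supplied here.

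Your term-by-term positivity argument breaks down in that case, because $\lambda^2c_1^2=\lambda^2(K_{\min}^2-k)$ can be negative. Moreover, ``the sign of $\lambda$ relative to $\omega(E)$'' is too weak an input. Write $M\cong M_{\min}\mathbin{\#}k\barCP$ with collapse map $c$, exceptional classes $E_i$ with Poincar\'e duals $e_i$, $\varepsilon_i=\langle[\omega],E_i\rangle$, and $K_{\min}=-c_1(M_{\min},\omega_{\min})$. Then $[\omega]-\lambda c_1=c^{\ast}\bigl([\omega_{\min}]-\lambda c_1(M_{\min},\omega_{\min})\bigr)+\sum_i(\lambda-\varepsilon_i)e_i$. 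Hence $([\omega]-\lambda c_1)^2$ equals the corresponding square on $M_{\min}$ minus $\sum_i(\varepsilon_i-\lambda)^2$. A sign constraint alone does not stop this correction from cancelling the positive main term.

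The missing idea is an exact equality, not a sign. Exceptional spheres are embedded spheres, so $E_i\in h_S(\pi_2(M))$, and spherical monotonicity gives $\varepsilon_i=\lambda\langle c_1,E_i\rangle=\lambda$ for every $i$. The correction then vanishes identically, and $[\omega]-\lambda c_1$ is pulled back from the minimal model. Its square is $[\omega_{\min}]^2+2\lambda\,K_{\min}\cdot[\omega_{\min}]+\lambda^2K_{\min}^2>0$. Here $K_{\min}^2\geq 0$ and $K_{\min}\cdot[\omega_{\min}]\geq 0$ hold by the definition of the Kodaira dimension for a minimal manifold with $\kappa\neq-\infty$.

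Separately, your sign discussion is unnecessary and partly wrong. The condition $\lambda\geq 0$ is built into Definition~\ref{def:monotone} of spherical monotonicity, so no McDuff-type argument is needed. Also, $\lambda<0$ would make $c_1$ \emph{negative}, not positive, on spheres of positive area.
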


The key to Theorem \ref{thm:A} is the elementary but remarkably effective observation,
essentially due to Borat \cite{Bo16} (see also \cite[Proposition 6.2]{SS25}),
that an aspherical degree two cohomology class of a space whose fundamental group contains no $\ZZ\oplus\ZZ$ is automatically atoroidal;
see Lemma \ref{lem:aspherical_implies_atoroidal}.
This converts spherical monotonicity into toroidal monotonicity,
and then \cite[Theorem 4.3]{Or25} applies.
In particular, the proof of Theorem \ref{thm:A} does not pass through the Eilenberg--MacLane space $K(\pi_1(M),1)$
and uses neither \cite[Lemma 2.1]{RT99} nor the results of \cite{FM20};
see Remark \ref{rem:comparison} for a detailed comparison with Theorem \ref{thm:previous} (ii).

Combining Corollary \ref{cor:A_dim4} with the classification of symplectic $4$-manifolds with Kodaira dimension $-\infty$
due to Li--Liu \cite{LL95}, Liu \cite{Liu96}, and Ohta--Ono \cite{OO96a,OO96b},
the hypothesis on the Kodaira dimension can be traded for a purely group-theoretic one.

\begin{corollary}\label{cor:group_theoretic}
    Let $(M,\omega)$ be a closed $4$-dimensional spherically monotone symplectic manifold
    such that $\pi_1(M)$ contains no subgroup isomorphic to $\ZZ\oplus\ZZ$
    and is not isomorphic to $\pi_1(\Sigma_g)$ for any $g\geq 2$.
    Then $\TC(M)=9$ and $\cat(M)=5$.
\end{corollary}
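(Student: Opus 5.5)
Corollary \ref{cor:group_theoretic} follows by reducing to Corollary \ref{cor:A_dim4}: it suffices to show that, under the stated hypotheses, the Kodaira dimension $\kappa(M,\omega)$ is not $-\infty$. We argue by contradiction and suppose $\kappa(M,\omega)=-\infty$. By the classification of Li--Liu \cite{LL95}, Liu \cite{Liu96}, and Ohta--Ono \cite{OO96a,OO96b}, $M$ is then diffeomorphic to a rational surface or to a blowup of a ruled surface, i.e.\ of an $S^2$-bundle over a closed orientable surface $\Sigma_g$. Rational surfaces are simply connected, which is excluded by the standing assumption that all symplectic manifolds here are not simply connected. So $M$ is an $S^2$-bundle over $\Sigma_g$, blown up some number of times, with $g\geq 1$.

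Next we identify $\pi_1(M)$. Blowing up does not change the fundamental group, since it is a connected sum with $\barCP$, which is simply connected. The long exact homotopy sequence of the bundle $S^2\to E\to\Sigma_g$, together with $\pi_1(S^2)=0$, shows that the projection induces an isomorphism $\pi_1(E)\cong\pi_1(\Sigma_g)$. Hence $\pi_1(M)\cong\pi_1(\Sigma_g)$ with $g\geq 1$.

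We now rule out both possible values of $g$. If $g=1$, then $\pi_1(M)\cong\ZZ\oplus\ZZ$, contradicting the hypothesis that $\pi_1(M)$ contains no subgroup isomorphic to $\ZZ\oplus\ZZ$. If $g\geq 2$, then $\pi_1(M)\cong\pi_1(\Sigma_g)$, which is excluded explicitly by hypothesis. Either way we reach a contradiction, so $\kappa(M,\omega)\neq-\infty$. Corollary \ref{cor:A_dim4} then applies and gives $\TC(M)=9$ and $\cat(M)=5$.

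The only step needing care is applying the classification correctly. The Kodaira dimension $-\infty$ case consists exactly of rational and ruled surfaces, up to blowups. In the ruled case the base must be orientable, since $M$ is symplectic and hence oriented and the sphere fibres are oriented. No further analysis of the monotonicity constant is needed, because monotonicity is already one of the hypotheses carried into Corollary \ref{cor:A_dim4}.
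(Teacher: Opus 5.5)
Your proposal is correct and follows essentially the same route as the paper. Assuming $\kappa(M,\omega)=-\infty$, the classification of Li--Liu, Liu and Ohta--Ono forces $\pi_1(M)$ to be trivial or $\pi_1(\Sigma_g)$ with $g\geq 1$. The standing non-simple-connectivity assumption, the no-$\ZZ\oplus\ZZ$ hypothesis and the explicit exclusion of $\pi_1(\Sigma_g)$ for $g\geq 2$ rule out each case, so Corollary \ref{cor:A_dim4} applies.
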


We prove Corollary \ref{cor:group_theoretic} in Section \ref{sec:spherical_to_toroidal}.

Let us close this subsection with a discussion of the scope of Theorem \ref{thm:A}
and of the problem of finding examples.
First, the case $\lambda=0$ of Theorem \ref{thm:A} is precisely the case of an aspherical symplectic form.
In this case hypothesis (ii) holds automatically, since $([\omega]-\lambda c_1)^n=[\omega]^n$ is a volume class,
and Theorem \ref{thm:A} recovers the theorem of Grant and Mescher \cite{GM20} combined with Borat's theorem \cite{Bo16}:
a closed symplectic manifold with aspherical $\omega$ whose fundamental group contains no $\ZZ\oplus\ZZ$
has maximal topological complexity.
The new content of Theorem \ref{thm:A} is therefore the genuinely monotone case $\lambda\neq 0$,
where the class $[\omega]-\lambda c_1$, rather than $[\omega]$ itself, is aspherical.

Second, as in \cite{Or25}, the question of exhibiting specific examples in dimension four remains delicate.
By the theorem of Gompf \cite[Theorem 0.1]{Go95},
every finitely presentable group --- in particular, every hyperbolic group ---
can be realized as the fundamental group of a closed symplectic $4$-manifold;
moreover, hyperbolic groups are generic among finitely presented groups
in the models of random groups, as emphasized in \cite{FM20}.
Corollary \ref{cor:group_theoretic} therefore predicts maximal topological complexity
for a potentially vast class of symplectic $4$-manifolds.
However, we do not know how to arrange spherical monotonicity with $\lambda>0$ in Gompf's construction.
We also note that the product construction of \cite[Remark 1.7]{Or25} does not produce such examples:
for a product $M_1\times N$ of a symplectically atoroidal (or aspherical) manifold $M_1$
with a strongly monotone manifold $N$,
the class $[\omega]-\lambda c_1$ is pulled back from $M_1$,
so that its top power vanishes and hypothesis (ii) of Theorem \ref{thm:A} fails ---
see Remark \ref{rem:sharpness}, where this failure is shown, in an instance,
to be essential rather than an artifact of the method.
We therefore pose the following problem.

\begin{problem}\label{prob:examples}
    Construct a closed spherically monotone symplectic $4$-manifold with monotonicity constant $\lambda>0$
    whose fundamental group contains no subgroup isomorphic to $\ZZ\oplus\ZZ$
    and is not isomorphic to the fundamental group of a closed orientable surface.
\end{problem}

By Corollary \ref{cor:group_theoretic}, any manifold as in Problem \ref{prob:examples} satisfies $\TC(M)=9$.

\subsection{Ruled surfaces and their blowups}

Our second theme is the case $\kappa(M,\omega)=-\infty$, which is excluded from Theorem \ref{thm:previous} and Corollary \ref{cor:A_dim4}.
By the classification results of Li--Liu \cite{LL95}, Liu \cite{Liu96}, and Ohta--Ono \cite{OO96a,OO96b},
the closed symplectic $4$-manifolds of Kodaira dimension $-\infty$ are precisely
the blowups of rational and ruled symplectic $4$-manifolds.
The simply connected ones are covered by the theorem of Farber--Tabachnikov--Yuzvinsky \cite{FTY03}:
they satisfy $\TC(M)=\dim M+1=5$.
The remaining ones are the blowups of $S^2$-bundles over a closed orientable surface $\Sigma_g$ of genus $g\geq 1$.
Our second main result computes their $\TC$ and $\cat$ completely for $g\geq 2$.

\begin{theorem}\label{thm:B}
    Let $g\geq 2$ and $k\geq 0$, and let $M$ be a smooth closed $4$-manifold obtained from the total space of an $S^2$-bundle over $\Sigma_g$
    by blowing up $k$ points, i.e., $M\cong E\mathbin{\#}k\barCP$ where $E\to\Sigma_g$ is an $S^2$-bundle.
    Then
    \[
        \cat(M)=4
        \quad\text{and}\quad
        \TC(M)=7.
    \]
\end{theorem}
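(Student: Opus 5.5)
We establish matching lower and upper bounds for both invariants. For the lower bounds we use cup-length. Write $p\colon E\to\Sigma_g$ for the bundle projection and $c\colon M\to E$ for the collapse of the exceptional spheres, and set $q=p\circ c\colon M\to\Sigma_g$. Let $a_i,b_i\in H^1(M;\QQ)$ be the pullbacks under $q$ of a symplectic basis of $H^1(\Sigma_g)$, and let $x\in H^2(M;\QQ)$ be a class with $x\cdot[\text{fiber}]\neq 0$. Such a class exists because the fiber sphere is homologically non-trivial in $E$ (it meets a section or bisection nontrivially), and $c^*$ is injective. By Poincaré duality, $a_1b_1x\neq 0$ in $H^4(M;\QQ)$, since $a_1b_1$ is dual to a fiber. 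This gives cup-length at least $3$, so $\cat(M)\geq 4$.

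For $\TC$ we use the zero-divisor cup-length over $\QQ$. Set $\bar u=u\otimes 1-1\otimes u$, and take the product
\[
\bar a_1\,\bar b_1\,\bar a_2\,\bar b_2\,\bar x^2 .
\]
The factors $\bar a_1\bar b_1\bar a_2\bar b_2$ are pulled back from $\Sigma_g\times\Sigma_g$. Their product is nonzero there; this is the Farber computation giving $\TC(\Sigma_g)=5$ for $g\geq2$, and it contributes the class $\pm 2\,(\omega\otimes\omega)$ plus lower terms, with $\omega$ the top class. Multiplying by $\bar x^2$, whose $x\otimes x$ coefficient is $-2$, we expect the $H^4\otimes H^4$ component to be a nonzero multiple of $(\omega x)\otimes(\omega x)$. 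The remaining terms involve $x^2$ or higher powers of $\omega$ in some factor and cannot reach bidegree $(4,4)$ nontrivially. Here $\omega x=q^*[\Sigma_g]\cdot x\neq 0$. The zero-divisor cup-length is therefore $6$, so $\TC(M)\geq 7$.

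For the upper bounds, we first prove $\cat(M)\leq 4$ by producing a cover of $M$ by four open sets, each contractible in $M$. Away from the blown-up points, $M$ fibers over $\Sigma_g$. Take a categorical cover $\Sigma_g=V_1\cup V_2\cup V_3$, where $V_1,V_2$ are disks and $V_3$ is a neighborhood of the $1$-skeleton. Over the disks the preimage is a product $D^2\times S^2$ (possibly blown up). This is not contractible, so instead we use the standard fact that $\cat(M)\leq\cat(\Sigma_g)+\cat(S^2)-1=4$ for fibrations with simply-connected-ish fiber. For blowups we use $\cat(M\#\barCP)\leq\max(\cat M,\cat\CP^2)=\max(4,3)$; connected-sum subadditivity in this form is a known result (Dranishnikov–Sadykov). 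Alternatively, the Dranishnikov bound $\cat\leq \mathrm{cd}(\pi_1)+\lceil\dim/2\rceil$ gives $2+2=4$ directly, with no case analysis; this is the route we prefer.

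Then $\TC(M)\leq 7$. The general bound $\TC\leq 2\cat-1=7$ already suffices, since $\TC(X)\leq\cat(X\times X)\leq 2\cat(X)-1$. Thus the upper bounds are immediate from Dranishnikov's inequality, and the main obstacle is making the zero-divisor calculation rigorous: identifying the correct degree-$2$ class $x$ (one dual to a section class, e.g. for the nontrivial bundle) and checking that no cancellation occurs in the $(4,4)$ component.
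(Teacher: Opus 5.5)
Your lower bounds are correct and use the same classes as the paper, namely $a_1b_1x$ and $\bar a_1\bar b_1\bar a_2\bar b_2\,\bar x^2$. The zero-divisor step is not the real obstacle. $H^4(\Sigma_g\times\Sigma_g;\QQ)=H^2\otimes H^2$ is one-dimensional, so $\bar a_1\bar b_1\bar a_2\bar b_2=2\,F\times F$ exactly, with no lower terms; here $F$ is your $\omega$, the pullback under $q$ of the orientation class of $\Sigma_g$. Multiplying by $\bar x^2=1\times x^2-2\,x\times x+x^2\times 1$ gives $-4\,(Fx)\times(Fx)\neq 0$, because the terms $F\times Fx^2$ and $Fx^2\times F$ vanish as $H^6(M;\QQ)=0$. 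So you need neither the paper's normalization $y^2=0$ nor the pullback along $c\times c$.

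The genuine gap is the upper bound $\cat(M)\le 4$, on which your $\TC(M)\le 2\cat(M)-1=7$ also rests.

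\emph{Your preferred route fails.} Dranishnikov's inequality $\cat X\le\mathrm{cd}(\pi_1X)+\lceil(\dim X-1)/2\rceil$ is a statement about the \emph{reduced} category. Read in the unreduced convention of the theorem, it would give $\cat(\CP^2)\le 2$, which is false. Converted correctly, for $\dim M=4$ and $\mathrm{cd}(\pi_1M)=2$ it yields only $\cat(M)\le 5$, the trivial dimension bound, and hence only $\TC(M)\le 9$.

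\emph{Your fallback covers only the trivial bundle.} The bound ``$\cat(E)\le\cat(\Sigma_g)+\cat(S^2)-1$'' is the product inequality. It handles $\Sigma_g\times S^2$, and its blowups if you grant the connected-sum formula you cite. It is not a theorem about fibrations: it fails for $S^3\to Sp(2)\to S^7$, since $\cat(Sp(2))=4$ in the unreduced convention. The general multiplicative (Varadarajan) bound gives only $\cat(E)\le 6$. So the twisted bundle is left unproved.

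\emph{What the paper does instead.} What is needed is exactly to exclude the top value $\cat(M)=\dim M+1$. The paper does this with the Berstein--\v{S}varc theorem: $\cat(X)=\dim X+1$ if and only if $b_X^{\dim X}\neq 0$, where $b_X\in H^1(X;I)$ is the Berstein--Schwarz class. Here $b_M^4$ is pulled back from $H^4(\pi_1(\Sigma_g);I^{\otimes 4})=0$, because $\mathrm{cd}(\pi_1(\Sigma_g))=2<4$. This treats both bundles and all $k$ at once.
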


Combining Theorem \ref{thm:B} with Theorem \ref{thm:simply_connected_ruled} below and Corollary \ref{cor:A_dim4},
we obtain the following trichotomy;
note the arithmetic progression of the pairs $(\cat,\TC)$.

\begin{corollary}\label{cor:trichotomy}
    Let $(M,\omega)$ be a closed symplectic $4$-manifold.
    \begin{enumerate}
        \item If $M$ is simply connected with $\kappa(M,\omega)=-\infty$, then $(\cat(M),\TC(M))=(3,5)$.
        \item If $\kappa(M,\omega)=-\infty$ and $M$ is a blowup of an $S^2$-bundle over $\Sigma_g$ with $g\geq 2$, then $(\cat(M),\TC(M))=(4,7)$.
        \item If $\kappa(M,\omega)\neq-\infty$, $(M,\omega)$ is spherically monotone, and $\pi_1(M)$ contains no $\ZZ\oplus\ZZ$, then $(\cat(M),\TC(M))=(5,9)$.
    \end{enumerate}
\end{corollary}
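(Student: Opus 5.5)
The trichotomy is an assembly of three earlier results, so I would treat the three cases separately and in each check that the hypotheses of the cited result hold.

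\emph{Case (i).} Here $M$ is simply connected with $\kappa(M,\omega)=-\infty$. I would invoke Theorem \ref{thm:simply_connected_ruled}, the Farber--Tabachnikov--Yuzvinsky computation $\TC(M)=\dim M+1=5$ for simply connected closed symplectic manifolds. For $\cat(M)=3$, I would take the upper bound from the classical estimate $\cat(M)\leq\dim M/2+1$ for simply connected closed manifolds; this follows from the existence of a CW structure with no $1$-cells, so that $M$ is $1$-connected of dimension $4$. For the lower bound I would use the cup-length: $[\omega]^2\neq 0$ gives cup-length at least $2$, hence $\cat(M)\geq 3$. The local explicit check that $M$ is not simply connected is suspended here, because the paper's standing convention is explicitly overridden in this case.

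\emph{Case (ii).} By the classification of Li--Liu, Liu and Ohta--Ono recalled before Theorem \ref{thm:B}, $M$ is diffeomorphic to $E\mathbin{\#}k\barCP$ with $E\to\Sigma_g$ an $S^2$-bundle. Since $\cat$ and $\TC$ are homotopy (indeed diffeomorphism) invariants, Theorem \ref{thm:B} applies verbatim and gives $(4,7)$.

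\emph{Case (iii).} The hypotheses are exactly those of Corollary \ref{cor:A_dim4}: $M$ is closed, $4$-dimensional and spherically monotone, with Kodaira dimension not $-\infty$, and $\pi_1(M)$ contains no $\ZZ\oplus\ZZ$. That corollary gives $\TC(M)=9$ and $\cat(M)=5$. There is one subtlety. Corollary \ref{cor:A_dim4} rests on Theorem \ref{thm:A}, whose hypothesis (ii) requires $([\omega]-\lambda c_1)^2\neq0$. I would note that this is handled inside Corollary \ref{cor:A_dim4}, presumably via the Kodaira dimension hypothesis as in \cite{Or25}, so it is not reverified here.

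The only mildly delicate point is case (i). The cleanest route is to cite Theorem \ref{thm:simply_connected_ruled} for both invariants, if it already records $\cat=3$. If it does not, I would supply the cup-length and connectivity bounds above. Everything else is direct citation.
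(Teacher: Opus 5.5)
Your proposal is correct and is essentially the paper's proof: cases (i), (ii) and (iii) are direct applications of Theorem \ref{thm:simply_connected_ruled}, Theorem \ref{thm:B} and Corollary \ref{cor:A_dim4}, respectively, and Theorem \ref{thm:simply_connected_ruled} already records $\cat(M)=3$ using exactly the connectivity upper bound and the cup-length bound from $[\omega]^2\neq 0$ that you describe. In (ii) the appeal to the Li--Liu/Ohta--Ono classification is harmless but unnecessary, since the hypothesis already gives $M\cong E\mathbin{\#}k\barCP$.
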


The genus one case is more subtle and we obtain only partial results;
see Proposition \ref{prop:genus_one} and Problem \ref{prob:genus_one}.

Theorem \ref{thm:B} also shows that the hypotheses of Theorem \ref{thm:A} and Corollary \ref{cor:A_dim4} are indispensable:
for $g\geq 2$ the product $\Sigma_g\times S^2$ is \textit{toroidally} monotone with hyperbolic fundamental group,
yet $\TC(\Sigma_g\times S^2)=7\neq 9$ and $\cat(\Sigma_g\times S^2)=4\neq 5$;
see Remark \ref{rem:sharpness} for details.

\subsection*{Organization of the paper}
In Section \ref{sec:preliminaries} we recall the necessary definitions from \cite{Or25}.
In Section \ref{sec:spherical_to_toroidal} we prove Theorem \ref{thm:A} and Corollaries \ref{cor:A_dim4} and \ref{cor:group_theoretic}.
In Section \ref{sec:ruled} we prove Theorem \ref{thm:B} and its complements.


\section{Preliminaries}\label{sec:preliminaries}

In this section we fix our conventions, which are those of \cite[Section 2]{Or25}.

For a connected CW-complex $X$, let $\mathcal{L}X$ be its free loop space and,
given a free homotopy class $\alpha\in [S^1,X]$,
let $\mathcal{L}_{\alpha}X\subset\mathcal{L}X$ be the connected component consisting of the loops in the class $\alpha$.
Two homomorphisms into $H_2(X;\ZZ)$ play a central role in what follows.
The first is the Hurewicz homomorphism
\[
	h_S\colon\pi_2(X)\to H_2(X;\ZZ),\qquad [w]\mapsto w_*([S^2]),
\]
where $w\colon S^2\to X$ is a map representing a given element of $\pi_2(X)$.
The second is defined on the fundamental group of $\mathcal{L}_{\alpha}X$:
under the exponential law, an element of $\pi_1(\mathcal{L}_{\alpha}X)$ corresponds to a map $w\colon S^1\times S^1\to X$,
and we set
\[
	h_T\colon\pi_1(\mathcal{L}_{\alpha}X)\to H_2(X;\ZZ),\qquad [w]\mapsto w_*([S^1\times S^1]).
\]
For the reader's convenience, we reproduce the following two definitions from \cite{Or25} verbatim.
The notions themselves are not new:
aspherical classes and symplectic forms are classical (see, e.g., \cite{RT99}),
atoroidal ones appear in \cite{Ke09,Gu13,GM20},
and spherical monotonicity is the standard monotonicity condition of Floer theory;
toroidal (negative) monotonicity was introduced by Ginzburg and G\"{u}rel \cite{GG16}
in their study of non-contractible periodic orbits of Hamiltonian diffeomorphisms.

\begin{definition}[{\cite[Definition 2.1]{Or25}}]\label{def:atoroidal_aspherical}
    Let $u\in H^2(X;R)$ where $R=\RR$ or $\ZZ$.
    \begin{enumerate}
        \item $u$ is called \textit{aspherical} if $u$ vanishes on $h_S(\pi_2(X))$.
        \item $u$ is called \textit{atoroidal} if $u$ vanishes on $h_T(\pi_1(\mathcal{L}_{\alpha}X))$ for all $\alpha\in [S^1,X]$.
    \end{enumerate}
    Here ``$u$ vanishes on a subgroup $G\subset H_2(X;\ZZ)$'' means that
    the Kronecker pairing $\langle u,c\rangle\in R$ is zero for every $c\in G$.
\end{definition}

Let $(M,\omega)$ be a connected closed symplectic manifold and
$c_1=c_1(M,\omega)\in H^2(M;\ZZ)$ the first Chern class of the tangent bundle $TM$
with respect to an almost complex structure compatible with $\omega$.

\begin{definition}[{\cite[Definition 2.2]{Or25}}]\label{def:monotone}
    Let $(M,\omega)$ be a connected closed symplectic manifold.
    \begin{enumerate}
        \item $(M,\omega)$ is called \textit{toroidally monotone} (resp.\ \textit{toroidally negative monotone})
        if there exists a non-negative (resp.\ negative) number $\lambda\in\RR$ such that for all $\alpha\in [S^1,M]$,
        \[
    	[\omega]|_{h_T(\pi_1(\mathcal{L}_{\alpha}M))} = \lambda c_1|_{h_T(\pi_1(\mathcal{L}_{\alpha}M))}.
        \]
        \item $(M,\omega)$ is called \textit{spherically monotone} (resp.\ \textit{spherically negative monotone})
        if there exists a non-negative (resp.\ negative) number $\lambda\in\RR$ such that
        \[
    	[\omega]|_{h_S(\pi_2(M))} = \lambda c_1|_{h_S(\pi_2(M))}.
        \]
    \end{enumerate}
\end{definition}

For the definition of the Kodaira dimension $\kappa(M,\omega)$ of a closed symplectic $4$-manifold
we refer the reader to \cite[Section 2.2]{Or25} and \cite{Li06}.

Finally, we recall the cohomological lower bounds that we shall use in Section \ref{sec:ruled}.
For a field $\mathbb{F}$,
the \textit{cup-length} $\mathrm{cl}_{\mathbb{F}}(X)$ is the largest integer $m$ such that there exist classes
$u_1,\ldots,u_m\in H^{>0}(X;\mathbb{F})$ with $u_1\smile\cdots\smile u_m\neq 0$;
one has $\cat(X)\geq \mathrm{cl}_{\mathbb{F}}(X)+1$, see \cite[Proposition 1.5]{CLOT03}.
A class $\bar{u}\in H^{\ast}(X\times X;\mathbb{F})$ is a \textit{zero-divisor} if it restricts to zero on the diagonal;
for $u\in H^{\ast}(X;\mathbb{F})$ we write
\[
    \bar{u} := 1\times u - u\times 1 \in H^{\ast}(X\times X;\mathbb{F}),
\]
which is a zero-divisor.
The \textit{zero-divisor cup-length} $\mathrm{zcl}_{\mathbb{F}}(X)$ is the largest integer $m$
such that there exist zero-divisors $\bar{u}_1,\ldots,\bar{u}_m$ with $\bar{u}_1\smile\cdots\smile\bar{u}_m\neq 0$;
one has
\begin{equation}\label{eq:zcl}
    \TC(X)\geq \mathrm{zcl}_{\mathbb{F}}(X)+1,
\end{equation}
see \cite[Theorem 7]{Fa03}.


\section{From spherical to toroidal monotonicity}\label{sec:spherical_to_toroidal}

In this section we prove Theorem \ref{thm:A}.
We first record the elementary fact that the evaluation-based Definition \ref{def:atoroidal_aspherical}
agrees with the pullback-based definitions used in \cite{GM20,SS25}.

\begin{lemma}\label{lem:matching}
    Let $X$ be a connected CW-complex and $u\in H^2(X;R)$ with $R=\RR$ or $\ZZ$.
    \begin{enumerate}
        \item $u$ is aspherical if and only if $f^{\ast}u=0\in H^2(S^2;R)$ for every continuous map $f\colon S^2\to X$.
        \item $u$ is atoroidal if and only if $f^{\ast}u=0\in H^2(S^1\times S^1;R)$ for every continuous map $f\colon S^1\times S^1\to X$.
    \end{enumerate}
\end{lemma}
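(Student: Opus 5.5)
Both parts reduce to the naturality of the Kronecker pairing, $\langle f^{\ast}u, a\rangle = \langle u, f_{\ast}a\rangle$ for a map $f\colon Y\to X$ and $a\in H_2(Y;\ZZ)$. We combine this with the fact that for $Y=S^2$ or $Y=S^1\times S^1$ the group $H^2(Y;R)$ is detected by evaluation on the fundamental class. By the universal coefficient theorem, $H^2(Y;R)\cong\Hom(H_2(Y;\ZZ),R)\oplus\mathrm{Ext}(H_1(Y;\ZZ),R)$. The Ext term vanishes because $H_1(Y;\ZZ)$ is free (zero or $\ZZ^2$), and $H_2(Y;\ZZ)\cong\ZZ$ is generated by $[Y]$. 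Hence $f^{\ast}u=0$ if and only if $\langle u, f_{\ast}[Y]\rangle=0$.

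For (i), every element of $h_S(\pi_2(X))$ has the form $f_{\ast}[S^2]$ for some map $f\colon S^2\to X$. Pointedness is harmless: since $X$ is connected, a free map $S^2\to X$ is homotopic to a based map, and homotopic maps have the same $f_{\ast}$. So $u$ vanishes on $h_S(\pi_2(X))$ exactly when $\langle u,f_{\ast}[S^2]\rangle=0$ for all such $f$. By the first paragraph, this is the same as $f^{\ast}u=0$ for every $f$.

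For (ii), we must identify $\bigcup_{\alpha} h_T(\pi_1(\mathcal{L}_{\alpha}X))$ with the set of classes $f_{\ast}[S^1\times S^1]$ for arbitrary $f\colon S^1\times S^1\to X$. Given $f$, let $\alpha$ be the free homotopy class of the loop $f(\cdot,1)$ in the second variable. The adjoint $t\mapsto f(t,\cdot)$ is then a loop in $\mathcal{L}_{\alpha}X$ based at $f(1,\cdot)$, and its $h_T$-image is $f_{\ast}[S^1\times S^1]$. This uses the exponential law $C(S^1,C(S^1,X))\cong C(S^1\times S^1,X)$, valid since $S^1$ is locally compact Hausdorff. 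Conversely, an element of $\pi_1(\mathcal{L}_{\alpha}X)$ is represented by such an adjoint map, and this is how $h_T$ is defined. The two sets therefore coincide, and the equivalence follows as in (i).

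The only point needing care is the basepoint bookkeeping in (ii): $h_T$ must be well defined independently of the chosen basepoint in $\mathcal{L}_{\alpha}X$, and an arbitrary $f$ must give an element of some $\pi_1(\mathcal{L}_{\alpha}X)$. Both hold because $h_T$ depends only on the homotopy class of the torus map, and any torus map is the adjoint of a loop based at its own starting loop. No further obstacle is expected.
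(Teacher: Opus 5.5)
Your proposal is correct and follows essentially the same route as the paper. In both, the universal coefficient theorem reduces $f^{\ast}u=0$ to $\langle u,f_{\ast}[Y]\rangle=0$, and then the set of all classes $f_{\ast}[Y]$ is identified with the image of $h_S$ (by basing a free sphere map) or with the union of the images of $h_T$ (by the exponential law). One small slip: $\alpha$ should be the free homotopy class of $f(1,\cdot)$, not $f(\cdot,1)$, since the adjoint $t\mapsto f(t,\cdot)$ lies in the component of $f(1,\cdot)$; your own phrase ``in the second variable'' already points to this.
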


\begin{proof}
    (i) Since $H_1(S^2;\ZZ)=0$,
    the universal coefficient theorem identifies
    \[
        H^2(S^2;R)\cong\Hom(H_2(S^2;\ZZ),R),
    \]
    and under this identification $f^{\ast}u$ corresponds to the homomorphism $c\mapsto\langle u,f_{\ast}c\rangle$.
    Hence $f^{\ast}u=0$ if and only if $\langle u,f_{\ast}[S^2]\rangle=0$.
    Since every continuous map $f\colon S^2\to X$ represents,
    after a choice of a path to the basepoint, a class $[f]\in\pi_2(X)$ with $h_S([f])=f_{\ast}[S^2]$,
    and since the homology class $f_{\ast}[S^2]$ does not depend on this choice,
    we obtain
    \[
        \left\{\,f_{\ast}[S^2] \relmiddle| f\colon S^2\to X\,\right\} = h_S(\pi_2(X)),
    \]
    and the claim follows.

    (ii) Since $H_1(S^1\times S^1;\ZZ)\cong\ZZ^2$ is free,
    the universal coefficient theorem again gives
    \[
        H^2(S^1\times S^1;R)\cong\Hom(H_2(S^1\times S^1;\ZZ),R),
    \]
    so that $f^{\ast}u=0$ if and only if $\langle u,f_{\ast}[S^1\times S^1]\rangle=0$.
    By the exponential correspondence $C(S^1,C(S^1,X))\cong C(S^1\times S^1,X)$,
    a loop $c\colon S^1\to\mathcal{L}X$ based at $\gamma\in\mathcal{L}_{\alpha}X$ corresponds to
    a map $\widehat{c}\colon S^1\times S^1\to X$ with $\widehat{c}(0,\cdot)=\gamma$,
    and by definition $h_T([c])=\widehat{c}_{\ast}[S^1\times S^1]$.
    Conversely, every continuous map $f\colon S^1\times S^1\to X$ arises in this way from a loop in $\mathcal{L}_{\alpha}X$
    for $\alpha=[f(0,\cdot)]$.
    Hence
    \[
        \bigcup_{\alpha\in [S^1,X]} h_T\bigl(\pi_1(\mathcal{L}_{\alpha}X)\bigr)
        = \left\{\,f_{\ast}[S^1\times S^1] \relmiddle| f\colon S^1\times S^1\to X\,\right\},
    \]
    and, since vanishing on a set of generators is equivalent to vanishing on the subgroup they generate,
    the claim follows.
\end{proof}

The following lemma is the key step.
At the level of symplectic forms it is extracted from the proof of \cite[Theorem 4]{Bo16} for torsion-free fundamental groups;
the covering argument which removes the torsion-freeness assumption appears in \cite[Proposition 6.2]{SS25}.
We give a complete proof for the reader's convenience,
and because our evaluation-based formulation allows both $R=\ZZ$ and $R=\RR$ without any assumption on characteristics.

\begin{lemma}\label{lem:aspherical_implies_atoroidal}
    Let $X$ be a connected CW-complex whose fundamental group $\pi_1(X)$ contains no subgroup isomorphic to $\ZZ\oplus\ZZ$,
    and let $u\in H^2(X;R)$ with $R=\RR$ or $\ZZ$.
    If $u$ is aspherical, then $u$ is atoroidal.
\end{lemma}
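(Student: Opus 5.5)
By Lemma \ref{lem:matching}, it suffices to show the following: for every continuous map $f\colon S^1\times S^1\to X$ we have $\langle u,f_{\ast}[S^1\times S^1]\rangle=0$. The plan is to study the induced homomorphism $f_{\ast}\colon\pi_1(T^2)=\ZZ\oplus\ZZ\to\pi_1(X)$ and show that, possibly after passing to a finite cover of the torus, $f$ can be homotoped to factor through a space whose second homology comes only from spheres.

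First, the image $G=f_{\ast}(\ZZ\oplus\ZZ)$ is an abelian quotient of $\ZZ^2$. It contains no copy of $\ZZ\oplus\ZZ$, so $G$ has torsion-free rank at most one.

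\emph{Torsion-free case.} If $G$ is torsion-free (this is Borat's setting), then $G$ is $0$ or $\ZZ$. In either case there is a basis $a,b$ of $\ZZ^2$ with $f_{\ast}(b)=1$; when $G=\ZZ$ we also have $f_{\ast}(a)=\gamma^m$ for a generator $\gamma$, and we rechoose the basis so that $f_{\ast}(b)=1$. Precompose $f$ with the diffeomorphism of $T^2$ realizing this change of basis; this only changes $f_{\ast}[T^2]$ by a sign. Now $f$ restricted to the circle $b$ is null-homotopic. Collapse the $b$-circle: up to homotopy of the $2$-skeleton, $T^2=(S^1\vee S^1)\cup e^2$, and $f$ extends over the cone on $b$. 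Hence $f$ factors, up to homotopy, through $T^2\cup_b D^2\simeq S^1\vee S^2$ (more precisely $T^2/b\simeq S^2\vee S^1$), and the fundamental class maps to the class of the $S^2$ summand. Therefore $f_{\ast}[T^2]=g_{\ast}[S^2]$ for some map $g\colon S^2\to X$, which lies in $h_S(\pi_2(X))$. Since $u$ is aspherical, it vanishes there.

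\emph{General case (the main obstacle).} When $G$ has torsion, choose a finite-index subgroup $\Lambda\subset\ZZ^2$ such that $f_{\ast}(\Lambda)$ is torsion-free. This is possible because $G$ is finitely generated abelian, so its torsion subgroup $T$ is finite; take $\Lambda=f_{\ast}^{-1}(\text{a torsion-free complement of }T)$, which has finite index. Let $p\colon T^2\to T^2$ be the corresponding covering, of degree $d=[\ZZ^2:\Lambda]$. Then $f\circ p$ falls under the torsion-free case, so
\[
d\,\langle u,f_{\ast}[T^2]\rangle=\langle u,(f\circ p)_{\ast}[T^2]\rangle=0 .
\]
Since $R=\ZZ$ or $\RR$ is torsion-free, it follows that $\langle u,f_{\ast}[T^2]\rangle=0$.

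The delicate point is checking that collapsing the null-homotopic circle really turns the torus class into a spherical class. I would verify this via the cofiber sequence $S^1\xrightarrow{b}T^2\to T^2/b$: the extension of $f$ over the cone on $b$ induces a map $\bar f\colon T^2\cup_b D^2\to X$. This space is homotopy equivalent to $S^1\vee S^2$, and its $H_2\cong\ZZ$ is generated by the sphere and is the image of $[T^2]$.
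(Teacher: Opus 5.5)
Your proof is correct and takes essentially the paper's route: pass to a finite cover $p$ of the torus on which a primitive class of $\pi_1$ is killed, then collapse that null-homotopic circle to get $S^2\vee S^1$, so that $(f\circ p)_*[T^2]\in h_S(\pi_2(X))$, and finally divide by $\deg p$ using that $R$ is torsion-free. The only difference is how the cover is chosen: the paper builds it from a single kernel element $v=d\,v_0$, taking $\Lambda=\ZZ(mv_0)\oplus\ZZ v_1$ with $m$ the order of $\varphi(v_0)$, whereas you take the preimage of a torsion-free complement of the torsion subgroup of the image, which uses the structure theorem for finitely generated abelian groups but arrives at the same torsion-free situation.
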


\begin{proof}
    By Lemma \ref{lem:matching}, it suffices to show that
    $\langle u,w_{\ast}[T^2]\rangle=0$ for every continuous map $w\colon T^2=S^1\times S^1\to X$.
    Fix such a $w$, choose basepoints, and let
    \[
        \varphi := w_{\ast}\colon \ZZ^2=\pi_1(T^2)\to\pi:=\pi_1(X)
    \]
    be the induced homomorphism;
    a different choice of basepoints changes $\varphi$ by a conjugation, which does not affect the argument below.

    If $\varphi$ were injective, then $\Image\varphi\cong\ZZ^2$ would be a subgroup of $\pi$, contradicting our hypothesis.
    Hence $\Ker\varphi\neq 0$.
    Choose $0\neq v\in\Ker\varphi$ and write $v=d\,v_0$ with $v_0\in\ZZ^2$ primitive and $d\geq 1$.
    Then $\varphi(v_0)^d=\varphi(v)=1$, so $t:=\varphi(v_0)\in\pi$ has finite order;
    let $m\geq 1$ denote this order, so that $m$ divides $d$.

    \smallskip
    \textit{Step 1: reduction to a primitive kernel element.}
    Extend $v_0$ to a basis $(v_0,v_1)$ of $\ZZ^2$ and consider the index $m$ subgroup
        $\Lambda:=\ZZ(m v_0)\oplus\ZZ v_1\subset\ZZ^2$.
        Let $p\colon\widehat{T}\to T^2$ be the covering space corresponding to $\Lambda$.
        Then $\widehat{T}$ is again a torus, $p$ has degree $m$,
        and the basis element $\widehat{v}_0\in\pi_1(\widehat{T})\cong\ZZ^2$ mapping to $m v_0$
        is primitive in $\pi_1(\widehat{T})$ and satisfies
        \[
            (w\circ p)_{\ast}(\widehat{v}_0)=\varphi(m v_0)=t^m=1.
        \]
        Thus $g:=w\circ p\colon\widehat{T}\to X$ kills a primitive element of the fundamental group of its source torus.
    If $\pi$ is torsion-free, then $m=1$, $p=\mathrm{id}$, and this step is vacuous.

    \smallskip
    \textit{Step 2: a torus killing a primitive class factors through $S^2\vee S^1$.}
    Let $g\colon T^2\to X$ be any continuous map such that $g_{\ast}(e_1)=1\in\pi$ for some primitive element $e_1\in\pi_1(T^2)$.
        Choose a product decomposition $T^2=S^1\times S^1$ whose first factor represents $e_1$.
        The loop $g|_{S^1\times\{s_0\}}$ is trivial in $\pi_1(X)$ and hence null-homotopic.
        Since $S^1\times\{s_0\}\subset T^2$ is a cofibration,
        $g$ is homotopic to a map $g'$ which is constant on $S^1\times\{s_0\}$,
        and $g'$ therefore factors as
        \[
            g'=\bar{g}\circ q,\qquad q\colon T^2\to T^2/\bigl(S^1\times\{s_0\}\bigr).
        \]
        In the standard CW structure of $T^2$
        (one $0$-cell, two $1$-cells $a,b$ with $a$ the collapsed circle, one $2$-cell attached along $aba^{-1}b^{-1}$),
        the quotient has one $0$-cell, the $1$-cell $b$,
        and a $2$-cell attached along the null-homotopic word $bb^{-1}$;
        hence
        \[
            T^2/\bigl(S^1\times\{s_0\}\bigr)\simeq S^2\vee S^1,
        \]
        and on cellular chains $q$ maps the $2$-cell of $T^2$ homeomorphically onto the $2$-cell of the quotient,
        so that $q_{\ast}[T^2]=\pm[S^2]\in H_2(S^2\vee S^1;\ZZ)$.
        Writing $\iota\colon S^2\hookrightarrow S^2\vee S^1$ for the inclusion, we conclude
        \[
            g_{\ast}[T^2]=g'_{\ast}[T^2]=\pm(\bar{g}\circ\iota)_{\ast}[S^2]\in h_S(\pi_2(X)).
        \]

    \smallskip
    \textit{Step 3: conclusion.}
    Applying Step 2 to $g=w\circ p$ and using the asphericity of $u$,
        we obtain $\langle u,(w\circ p)_{\ast}[\widehat{T}\,]\rangle=0$.
        On the other hand, since $p$ has degree $m$,
        \[
            (w\circ p)_{\ast}[\widehat{T}\,]=w_{\ast}\bigl(p_{\ast}[\widehat{T}\,]\bigr)=m\cdot w_{\ast}[T^2],
        \]
        whence $m\,\langle u,w_{\ast}[T^2]\rangle=0$ in $R$.
    As $R=\RR$ or $\ZZ$ is torsion-free as an abelian group, $\langle u,w_{\ast}[T^2]\rangle=0$.
    This completes the proof.
\end{proof}

\begin{remark}
    Related results in degree two,
    including the chain of inclusions
    \[
        \{\text{bounded}\}\subset\{\text{hyperbolic}\}\subset\{\text{atoroidal}\}\subset\{\text{aspherical}\}
    \]
    and the behaviour of atoroidal classes under connected sums,
    can be found in \cite{Ke09,BKS24}.
\end{remark}

\begin{proposition}\label{prop:transfer}
    Let $(M,\omega)$ be a connected closed symplectic manifold which is spherically monotone
    $($resp.\ spherically negative monotone$)$ with monotonicity constant $\lambda$.
    If $\pi_1(M)$ contains no subgroup isomorphic to $\ZZ\oplus\ZZ$,
    then $(M,\omega)$ is toroidally monotone $($resp.\ toroidally negative monotone$)$
    with the same monotonicity constant $\lambda$.
\end{proposition}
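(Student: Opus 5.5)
The plan is to apply Lemma \ref{lem:aspherical_implies_atoroidal} to the class
\[
    u := [\omega]-\lambda c_1 \in H^2(M;\RR),
\]
where $c_1$ is regarded in real cohomology through the coefficient map $H^2(M;\ZZ)\to H^2(M;\RR)$. First I will rewrite spherical (negative) monotonicity in Definition \ref{def:monotone}(ii) as the statement that $u$ vanishes on $h_S(\pi_2(M))$. This uses only that the Kronecker pairing is bilinear and compatible with the change of coefficients, so $\langle u,c\rangle=\langle[\omega],c\rangle-\lambda\langle c_1,c\rangle$ for every $c\in H_2(M;\ZZ)$. Hence $u$ is aspherical in the sense of Definition \ref{def:atoroidal_aspherical}(i), with $R=\RR$.

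Next, $M$ is a closed manifold and therefore has the homotopy type of a connected CW-complex, with the same fundamental group. By hypothesis $\pi_1(M)$ contains no subgroup isomorphic to $\ZZ\oplus\ZZ$, so Lemma \ref{lem:aspherical_implies_atoroidal} applies and shows that $u$ is atoroidal. Concretely, $\langle u,c\rangle=0$ for every $c\in h_T(\pi_1(\mathcal{L}_\alpha M))$ and every $\alpha\in[S^1,M]$. By the same bilinearity identity, this says exactly that $[\omega]$ and $\lambda c_1$ agree on $h_T(\pi_1(\mathcal{L}_\alpha M))$ for all $\alpha$.

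Finally, the sign condition on $\lambda$ (non-negative in the monotone case, negative in the negative monotone case) is inherited unchanged, since the constant is never altered. This gives toroidal monotonicity, respectively toroidal negative monotonicity, with the same $\lambda$ in the sense of Definition \ref{def:monotone}(i).

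There is no real obstacle, because Lemma \ref{lem:aspherical_implies_atoroidal} does all the work. The only point needing a little care is that its hypothesis concerns the homotopy type of a CW-complex. Closed smooth manifolds admit CW structures, and $h_S$ and $h_T$ are defined homotopy-invariantly through maps of $S^2$ and $T^2$ (Lemma \ref{lem:matching}), so this causes no difficulty.
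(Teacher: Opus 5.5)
Your proposal is correct and matches the paper's proof: you observe that spherical (negative) monotonicity says $u=[\omega]-\lambda c_1$ is aspherical, apply Lemma~\ref{lem:aspherical_implies_atoroidal} to conclude that $u$ is atoroidal, and read this back as toroidal (negative) monotonicity with the same $\lambda$. Your additional remarks, that $M$ is a CW-complex and that the Kronecker pairing is compatible with the change of coefficients, are harmless details that the paper leaves implicit.
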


\begin{proof}
    Set $u:=[\omega]-\lambda c_1\in H^2(M;\RR)$.
    Spherical (negative) monotonicity with constant $\lambda$ says precisely that $u$ vanishes on $h_S(\pi_2(M))$,
    i.e., that $u$ is aspherical.
    By Lemma \ref{lem:aspherical_implies_atoroidal}, $u$ is atoroidal,
    i.e., $u$ vanishes on $h_T(\pi_1(\mathcal{L}_{\alpha}M))$ for every $\alpha\in [S^1,M]$.
    This is exactly the statement that
    \[
        [\omega]|_{h_T(\pi_1(\mathcal{L}_{\alpha}M))}=\lambda c_1|_{h_T(\pi_1(\mathcal{L}_{\alpha}M))}
        \qquad\text{for all }\alpha\in [S^1,M],
    \]
    i.e., that $(M,\omega)$ is toroidally monotone (if $\lambda\geq 0$)
    resp.\ toroidally negative monotone (if $\lambda<0$) with constant $\lambda$.
\end{proof}

\begin{proof}[Proof of Theorem \ref{thm:A}]
    By Proposition \ref{prop:transfer},
    $(M,\omega)$ is toroidally monotone or toroidally negative monotone with constant $\lambda$.
    Together with the hypothesis $([\omega]-\lambda c_1)^n\neq 0$,
    \cite[Theorem 4.3]{Or25} yields $\TC(M)=4n+1$.
    The equality $\cat(M)=2n+1$ follows from \cite[Theorem 4.6]{Or25},
    which requires only spherical (negative) monotonicity and the non-vanishing of $([\omega]-\lambda c_1)^n$.
\end{proof}

\begin{proof}[Proof of Corollary \ref{cor:A_dim4}]
    Let $\lambda\geq 0$ be the monotonicity constant.
    By Theorem \ref{thm:A}, it suffices to show that $([\omega]-\lambda c_1)^2\neq 0$;
    note that hyperbolic groups contain no $\ZZ\oplus\ZZ$ since all their abelian subgroups are virtually cyclic,
    see \cite[Corollary III.$\Gamma$.3.10]{BH99}.

    By symplectically blowing down a maximal disjoint collection of exceptional spheres \cite{Mc90,Mc91}
    (see also \cite[Section 2.2]{Or25}),
    the manifold $M$ is diffeomorphic to $M_{\min}\mathbin{\#}k\barCP$
    for some $k\geq 0$ and a minimal symplectic $4$-manifold $(M_{\min},\omega_{\min})$,
    and $\kappa(M,\omega)=\kappa(M_{\min},\omega_{\min})$ by the definition of the Kodaira dimension.
    Let $c\colon M\to M_{\min}$ be the degree one map collapsing the summands $\barCP$,
    let $E_1,\ldots,E_k\in H_2(M;\ZZ)$ be the exceptional classes,
    and let $e_i\in H^2(M;\ZZ)$ be the Poincar\'{e} dual of $E_i$,
    so that $\langle e_i,E_j\rangle=E_i\cdot E_j=-\delta_{ij}$.
    Then
    \[
        [\omega]=c^{\ast}[\omega_{\min}]-\sum_{i=1}^{k}\varepsilon_i e_i,
        \qquad
        c_1(M,\omega)=c^{\ast}c_1(M_{\min},\omega_{\min})-\sum_{i=1}^{k}e_i,
    \]
    where $\varepsilon_i=\langle[\omega],E_i\rangle>0$ is the symplectic area of the $i$-th exceptional sphere;
    see \cite[equations (7.1.30) and (7.1.31)]{MS17}, where $\varepsilon_i=\pi\lambda_i^2$ in the notation there.
    Since exceptional spheres are embedded spheres, their classes lie in $h_S(\pi_2(M))$,
    and spherical monotonicity yields $\varepsilon_i=\lambda\langle c_1,E_i\rangle=\lambda$ for every $i$.
    Therefore
    \begin{align*}    
        [\omega]-\lambda c_1
        &=c^{\ast}\bigl([\omega_{\min}]-\lambda c_1(M_{\min},\omega_{\min})\bigr)
        +\sum_{i=1}^{k}(\lambda-\varepsilon_i)e_i\\
        &=c^{\ast}\bigl([\omega_{\min}]-\lambda c_1(M_{\min},\omega_{\min})\bigr).
    \end{align*}
    Since $c$ has degree one, writing $K_{\min}=-c_1(M_{\min},\omega_{\min})$ we obtain
    \[
        ([\omega]-\lambda c_1)\cdot([\omega]-\lambda c_1)
        =\int_{M_{\min}}\omega_{\min}\wedge\omega_{\min}
        +2\lambda\,K_{\min}\cdot[\omega_{\min}]+\lambda^2\,K_{\min}\cdot K_{\min}.
    \]
    As $(M_{\min},\omega_{\min})$ is minimal and $\kappa(M_{\min},\omega_{\min})=\kappa(M,\omega)\neq-\infty$,
    we have $K_{\min}\cdot K_{\min}\geq 0$ and $K_{\min}\cdot[\omega_{\min}]\geq 0$
    by the definition of the Kodaira dimension (see \cite[Section 2.2]{Or25} and \cite{Li06}).
    Hence the right-hand side is positive, and in particular $([\omega]-\lambda c_1)^2\neq 0$.
    Now Theorem \ref{thm:A} applies with $n=2$.
\end{proof}

\begin{remark}\label{rem:minimal_model}
    The proofs of \cite[Theorems 1.4 and 1.9]{Or25} invoke the inequalities
    $K\cdot K\geq 0$ and $K\cdot[\omega]\geq 0$ for the canonical class $K$ of $(M,\omega)$ itself.
    These hold by the definition of the Kodaira dimension when $(M,\omega)$ is \textit{minimal},
    but $K\cdot K$ may become negative after blowing up,
    since each blowup decreases $K\cdot K$ by one
    (consider, e.g., the blowup of a $K3$ surface, for which $K\cdot K=-1$ while $\kappa=0$).
    The argument given in the proof of Corollary \ref{cor:A_dim4} above,
    which reduces to the minimal model by means of the constraint
    $\langle[\omega],E_i\rangle=\lambda\langle c_1,E_i\rangle$
    imposed by spherical monotonicity on the exceptional classes,
    completes the proofs of \cite[Theorems 1.4 and 1.9]{Or25} in the non-minimal case;
    the statements of loc.\ cit.\ are unaffected.
\end{remark}

\begin{proof}[Proof of Corollary \ref{cor:group_theoretic}]
    Suppose that $\kappa(M,\omega)=-\infty$.
    Then, by the classification recalled in Section \ref{sec:introduction}
    (see \cite{LL95,Liu96,OO96a,OO96b} and \cite[Remark 1.6]{Or25}),
    $M$ is a blowup of a rational or of a ruled symplectic $4$-manifold,
    so that $\pi_1(M)$ is either trivial or isomorphic to $\pi_1(\Sigma_g)$ for some $g\geq 1$.
    Now, $M$ is not simply connected by our standing assumption,
    $\pi_1(M)\not\cong\pi_1(\Sigma_1)=\ZZ\oplus\ZZ$ since $\pi_1(M)$ contains no $\ZZ\oplus\ZZ$,
    and $\pi_1(M)\not\cong\pi_1(\Sigma_g)$ for $g\geq 2$ by hypothesis.
    Therefore $\kappa(M,\omega)\neq-\infty$, and Corollary \ref{cor:A_dim4} applies.
\end{proof}

\begin{remark}[Comparison with Theorem \ref{thm:previous} (ii)]\label{rem:comparison}
    Corollary \ref{cor:A_dim4} strengthens Theorem \ref{thm:previous} (ii) in three ways,
    at the cost of replacing the hypotheses there by the single condition that $\pi_1(M)$ contain no $\ZZ\oplus\ZZ$.
    \begin{enumerate}
        \item The conclusion is the equality $\TC(M)=9$ rather than the dichotomy $\TC(M)\in\{8,9\}$.
        \item The hypothesis is strictly weaker:
        if the centralizer of every non-trivial element of $\pi_1(M)$ is infinite cyclic,
        then $\pi_1(M)$ contains no $\ZZ\oplus\ZZ$
        (the centralizer of a generator of such a subgroup would contain the subgroup itself),
        while the converse fails:
        any group with torsion violates the centralizer condition
        (the centralizer of a non-trivial torsion element contains that element and hence is not infinite cyclic),
        so, e.g., hyperbolic groups with torsion qualify,
        as do the torsion-free solvable Baumslag--Solitar groups $BS(1,m)$ with $m\geq 2$.
        Moreover, no finiteness assumption (type $FL$) is needed, and torsion in $\pi_1(M)$ is allowed.
        \item The proof does not pass through the Eilenberg--MacLane space $K(\pi_1(M),1)$.
    \end{enumerate}
    The mechanism behind (i) can be traced in the obstruction-theoretic proof of \cite[Theorem 3]{FM20}:
    in the spectral sequence computing essential cohomology classes,
    the hypothesis on centralizers kills all obstructions except the final one,
    which lives in the row governed by $H^1$ of the (infinite cyclic) joint centralizers ---
    algebraically, the ``toroidal direction''.
    Passing from spherical to toroidal monotonicity via Proposition \ref{prop:transfer}
    eliminates precisely this obstruction geometrically.
\end{remark}

\begin{remark}[On the weight estimate underlying {\cite[Theorem 4.3]{Or25}}]\label{rem:SS_repair}
    The proof of \cite[Theorem 4.3]{Or25} rests on the estimate
    $\wgt\bigl([\pr_2^{\ast}\eta-\pr_1^{\ast}\eta]\bigr)\geq 2$ for atoroidal closed $2$-forms $\eta$,
    due to Grant and Mescher \cite[Corollary 3.6]{GM20}.
    Sandrock and Schick \cite[Remark 3.2]{SS25} observed a gap in the Mayer--Vietoris argument
    in the proof of \cite[Theorem 3.5]{GM20}
    (a snake-lemma computation performed on a sequence of cochain complexes
    which is only chain homotopy equivalent to an exact one)
    and provided a corrected and generalized proof,
    valid for arbitrary CW-complexes, singular cohomology and suitable field coefficients,
    which moreover yields the equality $\wgt=2$.
    All results of \cite{Or25} and of the present paper are therefore unaffected;
    we cite \cite{GM20} together with \cite{SS25} throughout.
\end{remark}

\begin{remark}\label{rem:Kodaira_Thurston}
    The hypothesis on $\pi_1(M)$ delimits the method:
    the Kodaira--Thurston manifold is spherically monotone with $\kappa=0$,
    but its fundamental group contains $\ZZ\oplus\ZZ$, so Theorem \ref{thm:A} does not apply.
    In view of the upper bounds for the topological complexity of spaces with nilpotent fundamental group \cite{Gr12},
    it is natural to expect $\TC$ to be non-maximal in this case;
    determining it would test the sharpness of the hypothesis (i) of Theorem \ref{thm:A}.
\end{remark}


\section{Ruled surfaces and their blowups}\label{sec:ruled}

In this section we prove Theorem \ref{thm:B}.
Throughout, $\Sigma_g$ denotes a closed orientable surface of genus $g\geq 1$
and $E\to\Sigma_g$ an $S^2$-bundle with structure group $SO(3)$;
up to isomorphism there are exactly two such bundles,
the trivial one $\Sigma_g\times S^2$ and the twisted one,
distinguished by the second Stiefel--Whitney class of the associated rank three real vector bundle.
For $k\geq 0$ we write $M_k:=E\mathbin{\#}k\barCP$, the $k$-fold blowup.
All cohomology groups in this section are taken with rational coefficients unless stated otherwise.

We first record the well known structure of the rational cohomology ring of $E$.

\begin{lemma}\label{lem:ring}
    Let $E\to\Sigma_g$ be an $S^2$-bundle as above.
    Then there is an isomorphism of graded rings
    \[
        H^{\ast}(E;\QQ)\cong H^{\ast}(\Sigma_g;\QQ)\otimes H^{\ast}(S^2;\QQ).
    \]
    More precisely,
    let $a_1,b_1,\ldots,a_g,b_g\in H^1(E;\QQ)$ and $F\in H^2(E;\QQ)$ denote the pullbacks of
    a standard symplectic basis of $H^1(\Sigma_g;\QQ)$ and of the fundamental cohomology class of $\Sigma_g$, respectively.
    Then there exists a class $y\in H^2(E;\QQ)$ with
    \[
        y^2=0,\qquad
        \langle yF,[E]\rangle\neq 0,
    \]
    and $H^{\ast}(E;\QQ)$ is generated by $a_1,b_1,\ldots,a_g,b_g$ and $y$ subject to the relations pulled back from $\Sigma_g$,
    namely $a_ib_j=\delta_{ij}F$, $a_ia_j=b_ib_j=0$, $F a_i=F b_i=0$ and $F^2=0$.
\end{lemma}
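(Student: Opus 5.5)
The plan is to apply the Leray--Hirsch theorem to the fibration $S^2\to E\xrightarrow{\pi}\Sigma_g$, and then to correct the chosen extension class so that its square vanishes.

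\emph{Step 1 (a global extension of the fibre class).} We need a class $x\in H^2(E;\QQ)$ restricting to a generator of $H^2(S^2;\QQ)$ on each fibre. Since the fibre $S^2$ is simply connected and $\pi_1(\Sigma_g)$ acts on $H^\ast(S^2)$ through the structure group $SO(3)$, which is connected, the local system is trivial. In the Serre spectral sequence $E_2^{p,q}=H^p(\Sigma_g)\otimes H^q(S^2)$, the only possible differential $d_3\colon E_3^{0,2}\to E_3^{3,0}=H^3(\Sigma_g)=0$ vanishes. The sequence therefore degenerates, and the fibre class lifts.

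A more concrete alternative: $E$ admits a section $s$ (obstructions lie in $H^{3}(\Sigma_g;\pi_2(S^2))=0$, and $\pi_1(S^2)=0$). The Poincar\'e dual $x$ of $[s(\Sigma_g)]$ then pairs to $1$ with the fibre class.

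\emph{Step 2 (ring structure via Leray--Hirsch).} Leray--Hirsch gives $H^\ast(E;\QQ)$ as a free $H^\ast(\Sigma_g;\QQ)$-module on $1$ and $x$, which yields the additive isomorphism. In particular, $\pi^\ast$ is injective, so the relations $a_ib_j=\delta_{ij}F$, $a_ia_j=b_ib_j=0$, $Fa_i=Fb_i=0$ and $F^2=0$ hold in $E$, as they are pulled back from $\Sigma_g$ (where the last ones hold for degree reasons). Note also that $\langle xF,[E]\rangle=\pm1$, since $F$ is dual to a fibre and $x$ restricts to a generator on the fibre.

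\emph{Step 3 (normalizing $y^2=0$).} This is the only real point. Since $H^4(E;\QQ)\cong\QQ$ is spanned by $xF$, we may write $x^2=c\,xF$ for some $c\in\QQ$. Set
\[
y:=x-\tfrac{c}{2}F.
\]
Then $y^2=x^2-c\,xF+\tfrac{c^2}{4}F^2=0$, and $yF=xF\neq0$.

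Finally, $y$ still restricts to the fibre generator, so Leray--Hirsch applies equally with $y$ in place of $x$. Hence $H^\ast(E;\QQ)$ is generated by the $a_i$, $b_i$ and $y$, subject only to the pulled-back relations and $y^2=0$. The graded ring isomorphism with $H^\ast(\Sigma_g;\QQ)\otimes H^\ast(S^2;\QQ)$ is $a_i\mapsto a_i\otimes1$, $b_i\mapsto b_i\otimes1$, $y\mapsto1\otimes[S^2]^\ast$. It is multiplicative by these relations together with graded commutativity, and bijective by the Leray--Hirsch basis.

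The main obstacle is this normalization in Step 3. It requires rational coefficients, since $c/2$ need not be integral in the twisted case, where $w_2\neq0$ and $x^2$ is odd; this explains why the lemma is stated over $\QQ$.
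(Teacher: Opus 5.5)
Your proposal is correct and follows essentially the same route as the paper. Both arguments use collapse of the Serre spectral sequence (trivial local system because $SO(3)$ is connected), a lift $x$ of the fibre generator with $\langle xF,[E]\rangle=\pm1$, and the normalization $y=x-\tfrac{c}{2}F$ where $x^2=c\,xF$, with your Leray--Hirsch step doing the job of the paper's dimension count. One small slip in your optional section argument: over the $2$-dimensional base the only obstruction lies in $H^2(\Sigma_g;\pi_1(S^2))=0$, not in $H^3(\Sigma_g;\pi_2(S^2))$, but this does not affect the main proof.
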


\begin{proof}
    Since the structure group $SO(3)$ is connected,
    the fundamental group of the base acts trivially on $H^{\ast}(S^2;\QQ)$,
    and the Serre spectral sequence of $S^2\to E\to\Sigma_g$ has
    $E_2^{p,q}=H^p(\Sigma_g;\QQ)\otimes H^q(S^2;\QQ)$.
    All differentials vanish for degree reasons
    ($H^{\mathrm{odd}}(S^2;\QQ)=0$ kills $d_2$, and $d_3$ lands in $E_3^{p+3,q-2}=0$ since $\dim\Sigma_g=2$),
    so the spectral sequence collapses and, in particular,
    the restriction map $H^2(E;\QQ)\to H^2(S^2;\QQ)$ to a fibre is surjective.
    Choose $\tilde{y}\in H^2(E;\QQ)$ restricting to a generator of $H^2(S^2;\QQ)$.
    Integration along the fibre shows $\langle\tilde{y}F,[E]\rangle=\pm 1$;
    in particular, $H^4(E;\QQ)\cong\QQ$ is spanned by $\tilde{y}F$,
    and we may write $\tilde{y}^2=c\,\tilde{y}F$ for some $c\in\QQ$.
    The class $y:=\tilde{y}-\tfrac{c}{2}F$ then satisfies
    $y^2=\tilde{y}^2-c\,\tilde{y}F+\tfrac{c^2}{4}F^2=0$ and $yF=\tilde{y}F\neq 0$.
    The relations among the pulled back classes hold already in $H^{\ast}(\Sigma_g;\QQ)$,
    and comparing dimensions via the collapsed spectral sequence shows that the resulting ring map
    $H^{\ast}(\Sigma_g;\QQ)\otimes H^{\ast}(S^2;\QQ)\to H^{\ast}(E;\QQ)$ is an isomorphism.
\end{proof}

Next we compute the Lusternik--Schnirelmann category of $M_k$.
Our tool is the Berstein--Schwarz class.
Let $X$ be a connected CW-complex with fundamental group $\pi$,
let $I=\Ker(\varepsilon\colon\ZZ[\pi]\to\ZZ)$ be the augmentation ideal,
and let $b_X\in H^1(X;I)$ denote the \textit{Berstein--Schwarz class} of $X$,
that is, the pullback of the class $b_{\pi}\in H^1(\pi;I)$
associated with the extension $0\to I\to\ZZ[\pi]\to\ZZ\to 0$
under a classifying map $X\to K(\pi,1)$;
we refer the reader to \cite{DR09} and \cite[Section 3]{FM20} for details.
The Berstein--\v{S}varc theorem
(\cite[Theorem 20]{Sch66} and \cite[Theorem A]{Ber76} for $n\geq 3$; the remaining case $n=2$ is due to \cite{DR09})
states that a connected CW-complex $X$ with $\dim X=n$ satisfies
$\cat(X)=n+1$ if and only if $b_X^n\neq 0\in H^n(X;I^{\otimes n})$.
In particular, if $\mathrm{cd}(\pi)<n=\dim X$,
then $b_X^n$ is pulled back from $H^n(\pi;I^{\otimes n})=0$ and hence vanishes,
so that
\begin{equation}\label{eq:BS_bound}
    \mathrm{cd}(\pi_1(X))<\dim X
    \quad\Longrightarrow\quad
    \cat(X)\leq\dim X.
\end{equation}

\begin{lemma}\label{lem:cat}
    Let $g\geq 1$ and $k\geq 0$. Then $\cat(M_k)=4$.
\end{lemma}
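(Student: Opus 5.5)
Both inequalities come from tools already in the paper. The upper bound $\cat(M_k)\leq 4$ will follow from \eqref{eq:BS_bound}, and the lower bound $\cat(M_k)\geq 4$ from the cup-length estimate $\cat\geq\mathrm{cl}_{\QQ}+1$ applied to a product of three positive-degree classes.

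\emph{Upper bound.} The first step is to identify $\pi_1(M_k)$. Since $S^2$ is simply connected, the long exact homotopy sequence of $S^2\to E\to\Sigma_g$ gives $\pi_1(E)\cong\pi_1(\Sigma_g)$. Blowing up is a connected sum with the simply connected $\barCP$, so van Kampen gives $\pi_1(M_k)\cong\pi_1(\Sigma_g)$. For $g\geq 1$ the surface $\Sigma_g$ is a $2$-dimensional $K(\pi,1)$, hence $\mathrm{cd}(\pi_1(M_k))=2<4=\dim M_k$. Now \eqref{eq:BS_bound} yields $\cat(M_k)\leq 4$. This relies on the Berstein--\v{S}varc theorem in dimension $n=4$, which the paper already quotes.

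\emph{Lower bound.} I will exhibit three classes of positive degree with nonzero product in $H^\ast(M_k;\QQ)$. Take the classes $a_1,b_1,y$ of Lemma~\ref{lem:ring} on $E$ and pull them back along the degree-one collapse map $c\colon M_k\to E$. By Lemma~\ref{lem:ring},
\[
    a_1b_1y=Fy\neq 0\in H^4(E;\QQ).
\]
Since $c$ has degree one, $c^\ast\colon H^4(E;\QQ)\to H^4(M_k;\QQ)$ is an isomorphism, sending the fundamental class to the fundamental class. Therefore $c^\ast a_1\smile c^\ast b_1\smile c^\ast y\neq 0$, so $\mathrm{cl}_{\QQ}(M_k)\geq 3$ and $\cat(M_k)\geq 4$. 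Equivalently, $c^\ast$ is injective on rational cohomology, because $M_k\cong E\mathbin{\#}k\barCP$ has cohomology equal to that of $E$ plus the exceptional classes.

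\emph{Expected obstacle.} The only delicate point is the upper bound, specifically the applicability of the Berstein--\v{S}varc criterion and of \eqref{eq:BS_bound}. These are already stated in the paper for all $n\geq 2$, so I expect the proof to be short. The one thing to check is that $\mathrm{cd}(\pi_1(\Sigma_g))=2$, which holds for $g\geq1$ because $\Sigma_g$ is aspherical. The case $g=1$ needs no separate treatment, since $\ZZ^2$ also has cohomological dimension $2$.
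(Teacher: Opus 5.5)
Your proposal is correct and follows the paper's proof: the upper bound via $\mathrm{cd}(\pi_1(M_k))=2<4$ and \eqref{eq:BS_bound}, and the lower bound via the cup product $a_1b_1y=Fy\neq 0$ pulled back along the degree-one collapse map $c$. The only small difference is that you use that $c^\ast$ is an isomorphism on $H^4$, where the paper proves injectivity of $c^\ast$ in all degrees via the projection formula. For this lemma your version is all that is needed.
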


\begin{proof}
    We first treat the case $k=0$, i.e., $M_0=E$.
    By Lemma \ref{lem:ring} we have $a_1b_1y=Fy\neq 0$,
    so $\mathrm{cl}_{\QQ}(E)\geq 3$ and hence $\cat(E)\geq 4$.
    On the other hand, since $S^2$ is simply connected,
    the bundle projection $E\to\Sigma_g$ induces an isomorphism of fundamental groups,
    so $\pi_1(E)\cong\pi_1(\Sigma_g)$ and $\mathrm{cd}(\pi_1(E))=2$,
    as $\Sigma_g$ is a closed aspherical surface for $g\geq 1$.
    Since $\mathrm{cd}(\pi_1(E))=2<4=\dim E$,
    the implication \eqref{eq:BS_bound} yields $\cat(E)\leq 4$.

    For $k\geq 1$, let $c\colon M_k\to E$ be the map collapsing the connected summands $\barCP$ to points.
    Then $c$ has degree one, and hence $c^{\ast}$ is injective on rational cohomology:
    indeed, by the projection formula,
    $c_{\ast}\bigl(c^{\ast}(x)\frown[M_k]\bigr)=x\frown c_{\ast}[M_k]=x\frown[E]$ for every $x\in H^{\ast}(E;\QQ)$,
    and $\frown[E]$ is an isomorphism by Poincar\'e duality.
    Therefore $c^{\ast}(a_1)c^{\ast}(b_1)c^{\ast}(y)=c^{\ast}(Fy)\neq 0$,
    which gives $\cat(M_k)\geq 4$ as before.
    Moreover, $c$ induces an isomorphism of fundamental groups,
    so $\pi_1(M_k)\cong\pi_1(E)\cong\pi_1(\Sigma_g)$ and $\mathrm{cd}(\pi_1(M_k))=2<4=\dim M_k$.
    Hence $\cat(M_k)\leq 4$ by \eqref{eq:BS_bound}.
\end{proof}

We are now in a position to prove Theorem \ref{thm:B}.

\begin{proof}[Proof of Theorem \ref{thm:B}]
    Let $g\geq 2$ and $k\geq 0$, and let $M=M_k$ be as above.
    The equality $\cat(M)=4$ is Lemma \ref{lem:cat}.

    \textit{Upper bound for $\TC$.}
    By \cite[Theorem 5]{Fa03} and Lemma \ref{lem:cat},
    \[
        \TC(M)\leq 2\cat(M)-1=7.
    \]

    \textit{Lower bound for $\TC$.}
    We first treat the case $k=0$.
    In the notation of Lemma \ref{lem:ring},
    consider the zero-divisors $\bar{a}_1,\bar{b}_1,\bar{a}_2,\bar{b}_2,\bar{y}\in H^{\ast}(E\times E;\QQ)$,
    where $\bar{u}=1\times u-u\times 1$.
    A direct computation using $F=a_1b_1=a_2b_2$ gives
    \[
        \bar{a}_1\bar{b}_1
        = 1\times F + F\times 1 + b_1\times a_1 - a_1\times b_1,
    \]
    and similarly for $\bar{a}_2\bar{b}_2$.
    Multiplying these two expressions,
    all mixed terms vanish
    because products of three or more one-dimensional classes are pulled back from $H^{\geq 3}(\Sigma_g;\QQ)=0$,
    because $a_ib_j=a_ia_j=b_ib_j=0$ for $i\neq j$,
    and because $F^2=Fa_i=Fb_i=0$;
    hence
    \[
        \bar{a}_1\bar{b}_1\bar{a}_2\bar{b}_2 = 2\,F\times F.
    \]
    Since $y^2=0$, we also have $\bar{y}^2=-2\,y\times y$, and therefore
    \begin{align*}
        \bar{a}_1\bar{b}_1\bar{a}_2\bar{b}_2\,\bar{y}^2
        &= (2\,F\times F)(-2\,y\times y)
        = -4\,(Fy)\times(Fy)\neq 0\\
        &\in H^4(E;\QQ)\otimes H^4(E;\QQ)\subset H^8(E\times E;\QQ).
    \end{align*}
    Thus $\mathrm{zcl}_{\QQ}(E)\geq 6$ and $\TC(E)\geq 7$ by \eqref{eq:zcl}.

    For $k\geq 1$, let $c\colon M_k\to E$ be the degree one collapse map as in the proof of Lemma \ref{lem:cat}.
    Then $c\times c$ is a degree one map between closed oriented $8$-manifolds,
    so $(c\times c)^{\ast}$ is injective on rational cohomology.
    Since $(c\times c)^{\ast}(\bar{u})=\overline{c^{\ast}u}$ for every $u\in H^{\ast}(E;\QQ)$,
    the product of the pulled back zero-divisors
    \[
        \overline{c^{\ast}a_1}\;\overline{c^{\ast}b_1}\;\overline{c^{\ast}a_2}\;\overline{c^{\ast}b_2}\;\bigl(\overline{c^{\ast}y}\bigr)^2
        =(c\times c)^{\ast}\bigl(\bar{a}_1\bar{b}_1\bar{a}_2\bar{b}_2\,\bar{y}^2\bigr)
    \]
    is non-zero.
    Hence $\mathrm{zcl}_{\QQ}(M_k)\geq 6$ and $\TC(M_k)\geq 7$.
    This completes the proof.
\end{proof}

For completeness, we record the simply connected case,
which follows from \cite{FTY03} and the dimension--connectivity upper bounds.

\begin{theorem}\label{thm:simply_connected_ruled}
    Let $(M,\omega)$ be a simply connected closed symplectic $4$-manifold.
    Then $\cat(M)=3$ and $\TC(M)=5$.
\end{theorem}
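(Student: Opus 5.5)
We prove the two equalities by matching lower and upper bounds, each coming from results already quoted in the paper. Throughout, $M$ is a simply connected closed $4$-manifold, so it is $1$-connected. Note that this statement is an exception to the standing non-simply-connected convention.

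\emph{Lower bounds.} Since $\omega$ is symplectic, $[\omega]^2\neq 0$ in $H^4(M;\QQ)$, so $\mathrm{cl}_{\QQ}(M)\geq 2$ and hence $\cat(M)\geq 3$. For $\TC$, put $u=[\omega]$ and use the zero-divisor $\bar u=1\times u-u\times 1$. Expanding gives
\[
\bar u^{2}=1\times u^{2}-2\,u\times u+u^{2}\times 1,
\qquad
\bar u^{4}=6\,u^{2}\times u^{2}.
\]
The reason is that every other term of the binomial expansion contains a factor $u^j$ with $j\geq 3$, which lies in $H^{\geq 6}(M)=0$. Since $u^2\times u^2\neq 0$ in $H^8(M\times M;\QQ)$ by the Künneth formula, we get $\mathrm{zcl}_{\QQ}(M)\geq 4$, and \eqref{eq:zcl} gives $\TC(M)\geq 5$. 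This is the Farber--Tabachnikov--Yuzvinsky argument \cite{FTY03}, and it uses only the single class $[\omega]$ with nonzero square.

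\emph{Upper bounds.} We apply the dimension--connectivity estimates. For an $(r-1)$-connected CW complex $X$ with $r\geq 1$, one has $\cat(X)\leq \dim X/r+1$ \cite{CLOT03}. With $r=2$ and $\dim M=4$ this gives $\cat(M)\leq 3$. For topological complexity, Farber's bound \cite[Theorem 5]{Fa03} gives $\TC(X)< 2\dim X/(r)+1$; in the unreduced convention this reads $\TC(M)\leq \cat(M\times M)\leq \dim(M\times M)/2+1=5$. Here we use that $M\times M$ is a simply connected $8$-dimensional complex. Combining the bounds yields $\cat(M)=3$ and $\TC(M)=5$.

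The only delicate point is the convention bookkeeping in the connectivity bound for $\cat$: we must check that a $1$-connected closed $4$-manifold is homotopy equivalent to a CW complex of dimension $4$ with no $1$-cells. By minimal cell structures for simply connected manifolds, it has a single $0$-cell, $2$-cells and one $4$-cell. The standard estimate in its unreduced form $\cat(X)\leq \lfloor \dim X/r\rfloor+1$ then applies. Everything else is routine cup-product algebra.
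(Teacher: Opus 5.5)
Your proof is correct and is essentially the paper's: the same cup-length and dimension--connectivity bounds. The differences are that you write out the Farber--Tabachnikov--Yuzvinsky lower bound $\bar u^{4}=6\,u^{2}\times u^{2}\neq 0$ instead of citing \cite{FTY03}, and that you bound $\TC(M)\leq\cat(M\times M)\leq 5$ directly instead of quoting Farber's connectivity estimate for $\TC$. One slip: the inequality $\TC(X)<2\dim X/r+1$ you attribute to \cite[Theorem 5]{Fa03} is misquoted, since as written it would force $\TC(M)\leq 4$ (that theorem is $\TC(X)\leq 2\cat(X)-1$, which also gives $5$ here), but the chain you actually use is valid.
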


\begin{proof}
    Since $M$ is simply connected,
    we have $\cat(M)\leq\frac{\dim M}{2}+1=3$ and $\TC(M)\leq\dim M+1=5$,
    see \cite[Theorem 1.50]{CLOT03} and \cite[Theorem 5.2]{Fa04}.
    Since $[\omega]^2\neq 0$, we have $\mathrm{cl}_{\RR}(M)\geq 2$ and hence $\cat(M)\geq 3$.
    The equality $\TC(M)=5$ is \cite[Corollary 3.2]{FTY03}.
\end{proof}

\begin{proof}[Proof of Corollary \ref{cor:trichotomy}]
    (i) is Theorem \ref{thm:simply_connected_ruled}.
    For (ii), by the classification cited in the proof of Corollary \ref{cor:group_theoretic},
    the non-simply connected closed symplectic $4$-manifolds with $\kappa=-\infty$ and base genus $g\geq 2$
    are exactly the manifolds $M_k$ of Theorem \ref{thm:B}.
    (iii) is Corollary \ref{cor:A_dim4}.
\end{proof}

In the genus one case, our method yields only the following partial result.

\begin{proposition}\label{prop:genus_one}
    Let $M$ be a blowup of an $S^2$-bundle over $T^2=\Sigma_1$.
    Then $\cat(M)=4$ and $5\leq\TC(M)\leq 7$.
    Moreover, $\TC(T^2\times S^2)=5$.
\end{proposition}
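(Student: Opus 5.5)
The first claim, $\cat(M)=4$, is already contained in Lemma \ref{lem:cat}, which was stated and proved for all $g\geq 1$. The upper bound $\TC(M)\leq 2\cat(M)-1=7$ then follows exactly as in the proof of Theorem \ref{thm:B}, from \cite[Theorem 5]{Fa03}. So the genuine content is the lower bound $\TC(M)\geq 5$ and the exact value for $T^2\times S^2$.

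\emph{Lower bound.} I would mimic the zero-divisor computation of Theorem \ref{thm:B}, using only one symplectic pair since $g=1$. In the notation of Lemma \ref{lem:ring} with $g=1$, the expansion
\[
\bar a_1\bar b_1 = 1\times F+F\times 1+b_1\times a_1-a_1\times b_1
\]
still holds. I would then multiply by $\bar y^2=-2\,y\times y$ and extract the component in $H^4(E)\otimes H^4(E)$:
\[
\bar a_1\bar b_1\bar y^2 = -2\bigl((Fy)\times y^2\cdots\bigr).
\]
Since $y^2=0$, the terms $1\times F$ and $F\times 1$ multiplied by $y\times y$ give $y\times Fy$ and $Fy\times y$, which are nonzero and linearly independent. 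Hence $\bar a_1\bar b_1\bar y^2\neq 0$ and $\mathrm{zcl}_\QQ(E)\geq 4$, giving $\TC(E)\geq 5$. For blowups, I would pull back along the degree one map $c\times c$ exactly as in the proof of Theorem \ref{thm:B}, using injectivity of $(c\times c)^*$.

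\emph{The case $T^2\times S^2$.} Here I would use the product inequality $\TC(X\times Y)\leq \TC(X)+\TC(Y)-1$ (unreduced; Farber \cite{Fa03}). With $\TC(T^2)=3$ \cite{Fa03} and $\TC(S^2)=3$, this gives only $5$, which matches the lower bound, so $\TC(T^2\times S^2)=5$. Let me double-check the arithmetic: $3+3-1=5$. Good.

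The main obstacle is therefore not this proposition but the general genus one blowups and twisted bundles, where neither the product formula applies nor is a better zero-divisor product available. This is why only the range $5\leq\TC(M)\leq 7$ is claimed, and I expect the paper to leave the exact value open, as in Problem \ref{prob:genus_one}. The only care needed is to verify that the lower-bound product survives for the twisted bundle; it does, since Lemma \ref{lem:ring} gives the same ring.
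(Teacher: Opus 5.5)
Your proposal is correct and follows the paper's proof essentially verbatim: $\cat(M)=4$ from Lemma \ref{lem:cat}, $\TC(M)\leq 2\cat(M)-1=7$, the lower bound from the four zero-divisors $\bar a_1,\bar b_1,\bar y,\bar y$ (pulled back along the degree one collapse for blowups), and $\TC(T^2\times S^2)\leq 3+3-1=5$ from the product inequality. One slip to fix: $\bar a_1\bar b_1\bar y^2$ has total degree $6$, so there is no $H^4(E;\QQ)\otimes H^4(E;\QQ)$ component to extract; the correct statement, as in the paper, is that its K\"unneth component in $H^2(E;\QQ)\otimes H^4(E;\QQ)$ is $-2\,y\times Fy\neq 0$, while the remaining terms lie in $H^4\otimes H^2$ and $H^3\otimes H^3$ and so cannot cancel it.
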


\begin{proof}
    The equality $\cat(M)=4$ and the upper bound $\TC(M)\leq 7$ are proved exactly as in
    Lemma \ref{lem:cat} and Theorem \ref{thm:B}.
    For the lower bound, in the notation of Lemma \ref{lem:ring} (with $g=1$) we compute
    \begin{align*}
        \bar{a}_1\bar{b}_1\,\bar{y}^2
        &=\bigl(1\times F+F\times 1+b_1\times a_1-a_1\times b_1\bigr)\bigl(-2\,y\times y\bigr)\\
        &=-2\,y\times Fy-2\,Fy\times y-2\,b_1y\times a_1y+2\,a_1y\times b_1y,
    \end{align*}
    whose K\"{u}nneth component in $H^2(E;\QQ)\otimes H^4(E;\QQ)$ equals $-2\,y\times Fy\neq 0$.
    Hence $\mathrm{zcl}_{\QQ}(E)\geq 4$ and $\TC(E)\geq 5$;
    for blowups one pulls back along the degree one collapse map as before.
    Finally, for the trivial bundle,
    the product inequality $\TC(X\times Y)\leq\TC(X)+\TC(Y)-1$ \cite[Theorem 11]{Fa03}
    together with $\TC(S^2)=3$ \cite[Theorem 8]{Fa03} and $\TC(T^2)=3$ \cite[Theorem 13]{Fa03} gives
    $\TC(T^2\times S^2)\leq 5$, whence $\TC(T^2\times S^2)=5$.
\end{proof}

\begin{problem}\label{prob:genus_one}
    Determine $\TC(M)\in\{5,6,7\}$ for the twisted $S^2$-bundle over $T^2$
    and for the blowups of $S^2$-bundles over $T^2$.
\end{problem}

We conclude this section by explaining how Theorem \ref{thm:B} shows the sharpness of Theorem \ref{thm:A}.

\begin{remark}[Sharpness of the hypotheses of Theorem \ref{thm:A}]\label{rem:sharpness}
    Let $g\geq 2$ and equip $M=\Sigma_g\times S^2$ with a product symplectic form
    $\omega=\omega_{\Sigma}\oplus\omega_{S^2}$,
    where $\omega_{S^2}$ is normalized so that $[\omega_{S^2}]=\lambda\,c_1(S^2)$ in $H^2(S^2;\RR)$ for some $\lambda>0$.
    By \cite[Remark 1.7]{Or25}
    (applied to the symplectically atoroidal factor $\Sigma_g$ and the strongly monotone factor $S^2=\CP^1$),
    the manifold $(M,\omega)$ is toroidally monotone with monotonicity constant $\lambda$,
    and $\kappa(M,\omega)=-\infty$ since $M$ is ruled.
    Moreover, $\pi_1(M)=\pi_1(\Sigma_g)$ is hyperbolic, hence contains no $\ZZ\oplus\ZZ$.
    Nevertheless, $\TC(M)=7\neq 9$ and $\cat(M)=4\neq 5$ by Theorem \ref{thm:B}.

    This is consistent with Theorem \ref{thm:A}:
    the class $u=[\omega]-\lambda c_1$ decomposes as
    $u=\bigl([\omega_{\Sigma}]-\lambda c_1(\Sigma_g)\bigr)\oplus\bigl([\omega_{S^2}]-\lambda c_1(S^2)\bigr)$,
    and the second summand vanishes by our normalization,
    so that $u$ is pulled back from $\Sigma_g$ and $u^2=0$.
    Thus the hypothesis (ii) of Theorem \ref{thm:A} fails,
    and the example shows that it cannot be removed
    even in the presence of toroidal monotonicity and a hyperbolic fundamental group.
    Equivalently, in Corollary \ref{cor:A_dim4} the assumption $\kappa(M,\omega)\neq-\infty$ is indispensable.
\end{remark}


\subsection*{Use of AI}
The author made substantial use of the large language model Claude (Anthropic; model Claude Fable 5) during the development of this work.
In particular, the key ideas of the proofs of Theorems \ref{thm:A} and \ref{thm:B}
--- notably the passage from spherical to toroidal monotonicity via Lemma \ref{lem:aspherical_implies_atoroidal} ---
as well as initial versions of the proofs and of the text of the manuscript were obtained in interaction with the model,
which was also used for literature searches and for checking references.
The author has independently verified all arguments, statements, and references against the original sources,
and takes full responsibility for all claims, proofs, and text in this article.

\bibliographystyle{amsalpha}
\bibliography{orita_bibtex}
\end{document}